\begin{document}

\title{On the Infimal Sub-differential Size of Primal-Dual Hybrid Gradient Method \blue{and Beyond}}

\author{Haihao Lu\thanks{Corresponding author, The University of Chicago, Booth School of Business (haihao.lu@chicagobooth.edu).} \and Jinwen Yang\thanks{The University of Chicago, Department of Statistics (jinweny@uchicago.edu).}}

\date{March 2023}

\maketitle

\begin{abstract}
    Primal-dual hybrid gradient method (PDHG, a.k.a. Chambolle and Pock method~\cite{chambolle2011first}) is a well-studied algorithm for minimax optimization problems with a bilinear interaction term. Recently, PDHG is used as the base algorithm for a new LP solver PDLP that aims to solve large LP instances by taking advantage of modern computing resources, such as GPU and distributed system. Most of the previous convergence results of PDHG are either on duality gap or on distance to the optimal solution set, which are usually hard to compute during the solving process. In this paper, we propose a new progress metric for analyzing PDHG, which we dub infimal sub-differential size (IDS),  by utilizing the geometry of PDHG iterates. IDS is a natural extension of the gradient norm of smooth problems to non-smooth problems, and it is tied with KKT error in the case of LP. Compared to traditional progress metrics for PDHG, such as duality gap and distance to the optimal solution set, IDS always has a finite value and can be computed only using information of the current solution. We show that IDS monotonically decays, and it has an $\mathcal O(\frac{1}{k})$ sublinear rate for solving convex-concave primal-dual problems, and it has a linear convergence rate if the problem further satisfies a regularity condition that is satisfied by applications such as linear programming, quadratic programming, TV-denoising model, etc.  Furthermore, we present examples showing that the obtained convergence rates are tight for PDHG. The simplicity of our analysis and the monotonic decay of IDS suggest that IDS is a natural progress metric to analyze PDHG. As a by-product of our analysis, we show that the primal-dual gap has $\mathcal O(\frac{1}{\sqrt{k}})$ convergence rate for the last iteration of PDHG for convex-concave problems. \blue{The analysis and results on PDHG can be directly generalized to other primal-dual algorithms, for example, proximal point method (PPM), alternating direction method of multipliers (ADMM) and linearized alternating direction method of multipliers (l-ADMM)}. 
    
\end{abstract}

\section{Introduction}
Linear programming (LP)~\cite{bertsimas1997introduction,dantzig2002linear} is a fundamental problem in mathematical optimization and operations research. The traditional LP solvers are essentially based on either simplex method or barrier method. However, it is highly challenging to further scale up these two methods. The computational bottleneck of both methods is solving linear equations, which does not scale well on modern computing resources, such as GPUs and distributed systems. Recently, a numerical study~\cite{applegate2021practical} demonstrates that an enhanced version of Primal-Dual Hybrid Gradient (PDHG) method, called PDLP\footnote{The solver is open-sourced at \href{https://developers.google.com/optimization}{Google OR-Tools}. }, can reliably solve LP problems to high precision. As a first-order method, the computational bottleneck of PDHG is matrix-vector multiplication, which can efficiently leverage GPUs and distributed system. It sheds light on solving huge-scale LP where the data need to be stored in a distributed system.

This work is motivated by this recent line of research on using PDHG for solving large-scale LP. The convergence guarantee of PDHG, for generic convex-concave problems or for LP, has been extensively studied in the literature~\cite{zhu2008efficient,chambolle2011first,chambolle2016ergodic,alacaoglu2019convergence,applegate2021faster}. Most of the previous convergence analyses on PDHG are based on either the primal-dual gap or the distance to the optimal solution setting. However, it can be highly non-trivial to compute these metrics while running the algorithms. For primal-dual gap, the iterated solutions when solving LP are often infeasible to the primal and/or the dual problem until the algorithm identifies an optimal solution; thus, the primal-dual gap at any iterates may likely be infinity. For the distance to the optimal solution set, it is not computable unless the optimal solution set is known. Furthermore, even if these values can be computed, they often oscillate dramatically in practice, which makes it hard to evaluate the solving progress. Motivated by these issues, the goal of this paper is to answer the following question:

\vspace{0.3cm}
\centerline{\textit{Is there an evaluatable and monotonically decaying progress metric for analyzing PDHG?}}

We provide an affirmative answer to the above question by proposing a new progress metric for analyzing PDHG, which we dub infimal sub-differential size (IDS). IDS essentially measures the distance between $0$ and the sub-differential set of the objective. It is a natural extension of the gradient norm to potentially non-smooth problems.  Compared with other progress measurements for PDHG, such as primal-dual gap and distance to optimality, IDS always has a finite value and can be computable directly only using the information of the current solution without the need of knowing the optimal solution set. More importantly, IDS decays monotonically during the solving process, making it a natural progress measurement for PDHG. The design of IDS also take into consideration the geometry of PDHG by using a natural norm of PDHG.

Furthermore, most previous PDHG analyzes~\cite{chambolle2011first,chambolle2016ergodic} focus on the ergodic rate, that is, the performance of the average PDHG iterates. On the other hand, it is frequently observed that the last iteration of PDHG has comparable and sometimes superior performance compared to the average iteration in practice~\cite{applegate2021practical}. Our analysis on IDS provides an explanation of such behavior. We show that IDS of the last iterate of PDHG has $\mathcal O(1/k)$ convergence rate for convex-concave problems. As a direct byproduct, the primal dual gap at the last iterate has $\mathcal O(1/\sqrt{k})$ convergence rate, which is inferior to the $\mathcal O(1/k)$ convergence of the average iterate~\cite{chambolle2016ergodic}. This explains why the average iterate can be a good option. Furthermore, we show that if the problem further has metric sub-regularity, a condition that is satisfied by many optimization problems, IDS and primal dual gap both enjoy linear convergence, whereas average iterates can often have sublinear convergence. This, \textcolor{black}{accompanied with other results under metric sub-regularity in literature,} explains the numerical observation that the last iteration may have superior behaviors.



More formally, we consider a generic minimax problem with a bilinear interaction term: 
\begin{equation}\label{eq:minmax}
    \min_{x\in \mathbb R^n}\max_{y\in \mathbb R^m} \mathcal L(x,y)=f(x)+\left \langle Ax,y \right \rangle-g(y)
\end{equation}
where $f:\mathbb R^n\rightarrow (-\infty,+\infty]$, $g:\mathbb R^m\rightarrow (-\infty,+\infty]$ are simple proper lower semi-continuous convex functions and $A\in \mathbb R^{m\times n}$ is a matrix. Note that functions $f(x)$ and $g(y)$ can encode the constraints $x\in\mathcal X$ and $y\in \mathcal Y$ by using an indicator function. Throughout the paper, we assume that there is a finite optimal solution to the minimax problem \eqref{eq:minmax}.The primal-dual form of linear programming can be formulated as an instance of \eqref{eq:minmax}:
$$\min_{x\ge 0}\max_y \mathcal L(x,y)=c^Tx+y^TAx-b^Ty\ .$$

Beyond LP, problems of form \eqref{eq:minmax} are ubiquitous in statistics and machine learning. For example,
\begin{itemize}
    \item In image processing, total variation (TV) denoising model~\cite{bredies2015tgv} plays a central role in variational methods for imaging and it can be reformulated as $\min_{x\ge 0}\max_{y:\Vert y \Vert_{\infty}\leq 1} \mathcal L(x,y)=\frac{\lambda}{2}\Vert x-f\Vert_2^2+y^T\nabla x$, where $\nabla$ is the differential matrix.
    \item In statistics, LASSO~\cite{tibshirani1996regression, chen1994basis} is a popular regression model, whose primal-dual formulation is $\min_{x}\max_y\mathcal L(x,y)=\lambda \Vert x \Vert_1+y^TAx-(\frac 12 \Vert y \Vert_2^2 +b^Ty)$.
    \item In machine learning, support vector machine (SVM)~\cite{cortes1995support,boser1992training} is a classic supervised learning method for classification and can be formulated as $\min_{x}\max_{ -1/n\le y_i/b_i\le 0}\mathcal L(x,y)=\frac{\lambda}{2}\Vert x\Vert_2^2+y^TAx-\frac 1n\sum_{i=1}^n\frac{ny_i}{b_i}$.
\end{itemize}


Primal-dual hybrid gradient method (PDHG, a.k.a. Chambolle and Pock algorithm \cite{chambolle2011first}) described in Algorithm \ref{alg:pdhg} is one of the most popular algorithms for solving the structured minimax problem \eqref{eq:minmax}. It has been extensively used in the field of image processing: total-variation image denoising \cite{bredies2015tgv}, multimodal medical imaging \cite{knoll2016joint}, computation
of nonlinear eigenfunctions \cite{gilboa2016nonlinear}, and many others. More recently, PDHG also serves as the base algorithm for a new large-scale LP solver PDLP \cite{applegate2021practical, applegate2021infeasibility,applegate2021faster}. 


\begin{algorithm}
    \renewcommand{\algorithmicrequire}{\textbf{Input:}}
    \caption{Primal Dual Hybrid Gradient (PDHG)\protect\footnotemark}
    \label{alg:pdhg}
    \begin{algorithmic}[1]
        \REQUIRE Initial point $(x_0,y_0)$, step-size $s >0$.
        \FOR{$k=0,1,...$}
        \STATE $x_{k+1}=\text{prox}_{f}^{s}(x_k-s A^Ty_k)$
        \STATE $y_{k+1}=\text{prox}_{g}^{s}(y_k+s A(2x_{k+1}-x_k))$
        \ENDFOR
    \end{algorithmic}
\end{algorithm}
\footnotetext{PDHG is often presented in a form with different primal and dual step sizes~\cite{chambolle2011first,chambolle2016ergodic}. Here, we choose to use the same primal and dual step size for notational convenience. Our results can easily extend to the case of different step sizes by rescaling the variable.}

The contribution of the paper can be summarized as follows:
\begin{itemize}
    \item \blue{We propose to identify the ``right'' progress metric when studying an optimization algorithm. We propose a new progress metric IDS for analyzing PDHG, and show that it monotonically decays (see Section \ref{sec:sec-decay-sublinear}).} 
    \item We show that IDS converges to $0$ with $\mathcal O(1/k)$ convergence rate when solving a convex-concave minimax problem \eqref{eq:minmax} (Theorem \ref{thm:thm-decay}). As a by-product, this shows $\mathcal O(1/\sqrt{k})$ non-ergodic convergent rate of the duality gap for PDHG when solving convex-concave minimax problems.
    \item We show that IDS converges to $0$ with a linear convergence rate if the problem satisfies metric sub-regularity, which is satisfied by many applications.
    \blue{\item We extend the above results of PDHG to other classic algorithms for minimax problems, including proximal point method, alternating direction method of multipliers and linearized alternating direction method of multipliers. 
    \item The proofs of the above results are surprisingly simple, which is in contrast to the recent literature on the last iterate convergence for cousin algorithms.}
\end{itemize}

\subsection{Related literature}
\textbf{Convex-concave minimax problems.} There have been several lines of research that develop algorithms for convex-concave minimax problems and general monotone variational inequalities. The two most classic algorithms may be proximal point method (PPM) and extragradient method (EGM). Rockafellar introduces PPM for monotone variational inequalities \cite{rockafellar1976monotone} and, around the same time, Korpelevich proposes EGM for solving the convex-concave minimax problem \cite{korpelevich1976extragradient}. Convergence analysis on PPM and EGM has been flourishing since then. Tseng proves the linear convergence for PPM and EGM on strongly-convex-strongly-concave minimax problems and on unconstrained bilinear problems \cite{tseng1995linear}. Nemirovski shows that EGM, as a special case of mirror-prox algorithm, exhibits $\mathcal O(\frac 1\epsilon)$ sublinear rate for solving general convex-concave minimax problems over a compact and bounded set \cite{nemirovski2004prox}. Moreover, the connection between PPM and EGM is strengthened in \cite{nemirovski2004prox}, i.e., EGM approximates PPM. 
Another line of research investigates minimax problems with a bilinear interaction term~\eqref{eq:minmax}. For such problems, the two most popular algorithms are perhaps PDHG \cite{chambolle2011first} and Douglas-Rachford splitting (DRS) \cite{douglas1956numerical, eckstein1992douglas}.
Recently, motivated by application in statistical and machine learning, there has been renewed interest in minimax problems. In particular, for bilinear problem ($f(x)=g(y)=0$ in \eqref{eq:minmax}), \cite{daskalakis2018training,mokhtari2020unified} show that the Optimistic Gradient Descent Ascent (OGDA) enjoys a linear convergence rate with full rank $A$ and \cite{mokhtari2020unified} proves that EGM and PPM also converge linearly under the same condition. Besides, \cite{mokhtari2020unified} builds up the connection between OGDA and PPM: OGDA is an approximation to PPM for bilinear problems. Under the framework of ODE, \cite{lu2020s} investigates the dynamics of unconstrained primal-dual methods and yields tight condition under which different algorithms exhibit linear convergence.

\textbf{Primal-dual hybrid gradient method (PDHG).} The early works on PDHG were motivated by applications in image processing and computer vision~\cite{zhu2008efficient,condat2013primal,esser2010general,he2012convergence,chambolle2011first}. Recently, PDHG is the base algorithm used in a large-scale LP solver~ \cite{applegate2021practical,applegate2021infeasibility,applegate2021faster}. The first convergence guarantee of PDHG was in the average iteration and was proposed in~\cite{chambolle2011first}. Later, \cite{chambolle2016ergodic} presented a simplified and unified analysis of the ergodic rate of PDHG. More recently, many variants of PDHG have been proposed, including adaptive version~\cite{vladarean2021first, malitsky2018first,pock2011diagonal,goldstein2015adaptive} and stochastic version~\cite{chambolle2018stochastic,alacaoglu2019convergence,lu2021linear}. PDHG was also shown to be equivalent to DRS \cite{o2020equivalence,liu2021acceleration}.

\textbf{Ergodic rate vs non-ergodic rate.} In the literature on convex-concave minimax problems, the convergence is often described in the ergodic sense, i.e., on the average of iterates. For example, Nemirovski \cite{nemirovski2004prox} shows that EGM (or more generally the mirror prox algorithm) has $\mathcal O(1/k)$ primal-dual gap for the average iterate; Chambolle and Pock \cite{chambolle2011first, chambolle2016ergodic} show that PDHG has $\mathcal O(1/k)$ primal-dual gap on the average iteration. More recently, the convergence rate of last iterates (i.e., non-ergodic rate) attracts much attention in the machine learning and optimization community due to its practical usage. Most of the works on last iterates study the gradient norm when the problem is differentiable. For example, \cite{golowich2020last,gorbunov2021extragradient} show that the squared norm of gradient of iterates from EGM converges at $\mathcal O(1/k)$ sublinear rate. \textcolor{black}{ \cite{yoon2021accelerated} proposes an extra anchored gradient descent algorithm converges with faster rate $\mathcal O(1/k^2)$ when the objective function is smooth. The recent works~\cite{cai2022tight,gorbunov2022last} study the last iteration convergence of EGM and OGDA in the constrained setting for EGM and OGDA. More specifically,  \cite{cai2022tight} utilizes the sum-of-squares (SOS) programming to analyze the tangent residual and \cite{gorbunov2022last} uses semi-definite programming (SDP) and Performance Estimation Problems (PEPs) to derive the last iteration bounds of a complicated potential function for the two algorithms. The tangent residual can be viewed as a special case of IDS when $f$ and $g$ are the sum of a smooth function and an indicator function, and IDS can handle general non-smooth terms.} 

\textcolor{black}{Another related line of works is the study of squared distance between successive iterates for splitting schemes~\cite{davis2016convergence,davis2017faster}. Although there are connections between IDS and distance between successive iterates, the geometric interpretations are quite different: IDS is a direct quality metric to characterize the size of the sub-differential at current iterate while the distance between successive iterates measures fixed-point residual, which is an indirect characterization of the quality of the solution. In terms of the primal-dual gap, \cite{golowich2020last} shows that the last iterate of EGM has $\mathcal O(1/\sqrt{k})$ rate, which is slower than the average iterate of EGM, and \cite{davis2016convergence,fercoq2019coordinate} shows $\mathcal O(1/\sqrt{k})$ rate of objective errors for some primal-dual methods.}  As a by-product of our analysis, we show that the last iterate of PDHG also has $\mathcal O(1/\sqrt{k})$ rate in the primal-dual gap. 

{On the other hand, linear convergence of primal-dual algorithms are often observed in the last iterate.} To obtain the linear convergence rate, additional conditions are required. There are three different types of conditions studied in the literature: strong-convexity-concavity~\cite{tseng1995linear} (i.e. strong monotoncity), interaction dominance/negative comonotonicity~\cite{grimmer2020landscape,lee2021fast}, and metric sub-regularity~\cite{latafat2019new,liang2016convergence,alacaoglu2019convergence,fercoq2021quadratic}. The metric used in the above analysis is usually the norm of gradient for differentiable problems or the distance to optimality. The linear convergence under additional assumptions partially explains the behavior that practitioners may choose to favor the last iterate over the average iterate.

\textbf{Metric sub-regularity.} The notion of metric sub-regularity is extensively studied in the community of variational analysis~\cite{ioffe1979necessary,ioffe2008metric,dontchev2004regularity,kruger2015error,zheng2007metric,zheng2010metric,zheng2014metric,dontchev2009implicit}. It was first introduced in~\cite{ioffe1979necessary} and later the terminology ``metric sub-regularity'' was suggested in~\cite{dontchev2004regularity}, which is also closely related to calmness~\cite{henrion2002calmness}. For more detail in this direction, see~\cite{dontchev2009implicit}. 

The regularity condition holds for many important applications. For example, \cite{zheng2014metric,latafat2019new} show that the problem with piecewise linear quadratic functions on a compact set satisfies the regularity condition, including Lasso and support vector machines, etc. Partially due to this reason, recently there arises extensive interest in analyzing first-order methods under the assumption of metric sub-regularity. For convex minimization, \cite{drusvyatskiy2018error} shows that the metric sub-regularity of sub-gradient is equivalent to the quadratic error bound. The results for primal-dual algorithms are also fruitful ~\cite{latafat2019new,liang2016convergence,alacaoglu2019convergence,fercoq2021quadratic,lu2020adaptive}. In the context of PDHG, under metric sub-regularity, \cite{latafat2019new,fercoq2021quadratic} proves the linear rate of deterministic PDHG and \cite{alacaoglu2019convergence} shows the linear convergence of stochastic PDHG in terms of distance to the optimal set.

\subsection{Notations} 
We use $\Vert\cdot\Vert_2$ to denote Euclidean norm and $\left \langle \cdot, \cdot \right \rangle$ for its associated inner product. For a positive definite matrix $M\succ 0$, we denote $\left \langle \cdot, \cdot \right \rangle_M=\left \langle \cdot, M\cdot \right \rangle$. Let $\Vert\cdot\Vert_M$ be the norm induced by the inner product $\left \langle \cdot, \cdot \right \rangle_M$. Let $dist_M(z,\mathcal Z)$ be the distance between point $z$ and set $\mathcal Z$ under the norm $\Vert \cdot \Vert_M$, that is, $dist_M(z,\mathcal Z)=\min_{u\in \mathcal Z}\Vert u \Vert_M$. Denote $\mathcal{Z}^*$ as the optimal solution set to \eqref{eq:minmax}. Let $\sigma_{min}^+(A)$ be the minimum nonzero singular value of a matrix $A$. Denote $\Vert A \Vert$ the operator norm of a matrix $A$. Let $\iota_{\mathcal C}(\cdot)$ be the indicator function of the set $\mathcal C$. $f(x)=\mathcal O(g(x))$ denote that for sufficiently large $x$, there exists constant $C$ such that $f(x)\leq Cg(x)$ and $f(x)=\Omega(g(x))$ denote that for sufficiently large $x$, there exists constant $c$ such that $f(x)\geq cg(x)$. The proximal operator is defined as $\text{prox}_h^{\tau}(x):=\text{argmin}_{y\in \mathbb R^n} \left \{h(y)+\frac{1}{2\tau} \Vert y-x \Vert^2 \right \}$. Denote $z=(x,y)$, and $\mathcal F(z)=\mathcal F(x,y)=\begin{pmatrix} \partial f(x)+A^Ty \\ -Ax+\partial g(y) \end{pmatrix}$ as the sub-differential of the objective. \blue{ For a matrix $A$, denote $A^-$ a pseudo-inverse of $A$ and $\mathrm{range}(A)$ the range of $A$. }

\section{Infimal sub-differential size (IDS)}\label{sec:sec-decay-sublinear}

In this section, we introduce a new progress measurement, which we dub infimal sub-differential size (IDS), for PDHG. IDS is a natural extension of the squared norm of gradient for smooth optimization problems to non-smooth optimization problems. Compared to other progress measurements for PDHG, such as primal-dual gap and distance to optimality, IDS always has a finite value and is computable directly without the need to know the optimal solution set. More importantly, unlike the other metric that may oscillate over time, the IDS monotonically decays along the iteration of the PDHG, further suggesting that the IDS is a natural progress measurement for PDHG. Here is the formal definition of IDS:
\begin{mydef}\label{def:def-distance}
The infimal sub-differential size (IDS) at solution $z$ for PDHG with step-size $s$ is defined as:
    \begin{equation}\label{eq:IDS}
        dist_{P_s^{-1}}^2(0,\mathcal F(z)) \ ,
    \end{equation}
where $P_s=\begin{pmatrix}
\frac 1s I_n & -A^T \\ -A & \frac 1s I_m
\end{pmatrix}$ is the PDHG norm and $\mathcal F(z)=\begin{pmatrix} \partial f(x)+A^Ty \\ -Ax+\partial g(y) \end{pmatrix}$ is the sub-differential of the objective $\mathcal L(x,y)$.
\end{mydef}

Note that the first-order optimality condition of \eqref{eq:minmax} is $0\in \mF(z)$. In other words, if the original minimax problem \eqref{eq:minmax} is feasible and bounded, the minimization problem
\begin{equation}\label{eq:iss-minimization}
    \min_{z\in \RR^{m+n}} dist_{P_s^{-1}}^2(0,\mathcal F(z))
\end{equation}
has optimal value $0$, and furthermore \eqref{eq:iss-minimization} and \eqref{eq:minmax} share the same optimal solution set, thus IDS is a valid progress measurement.


\begin{figure}
\centering
\begin{subfigure}{0.4\textwidth}
  \centering
  \includegraphics[scale=0.35]{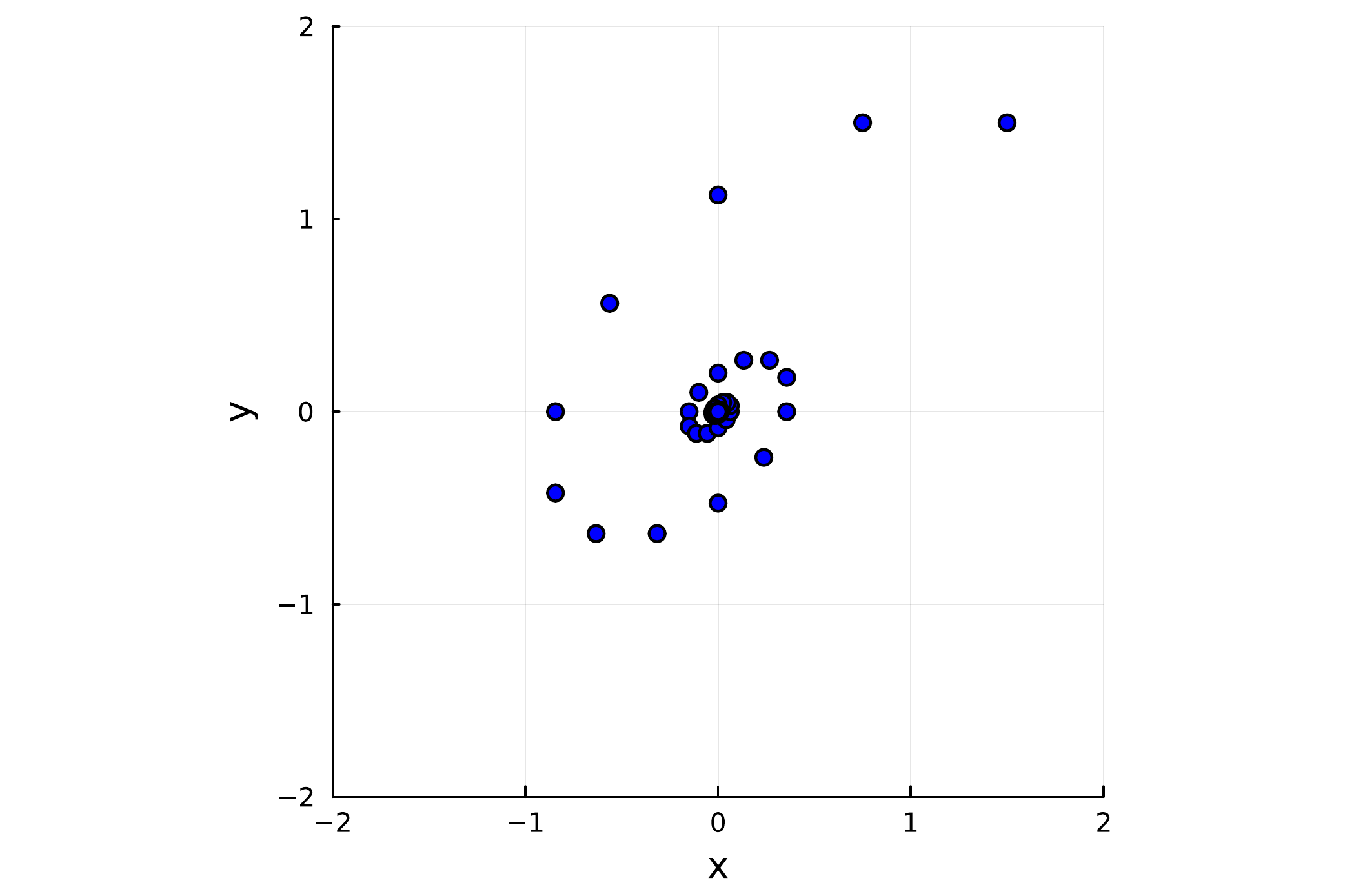}
  \caption{PDHG}
  \label{fig:PDHG}
\end{subfigure}%
\begin{subfigure}{0.4\textwidth}
  \centering
  \includegraphics[scale=0.35]{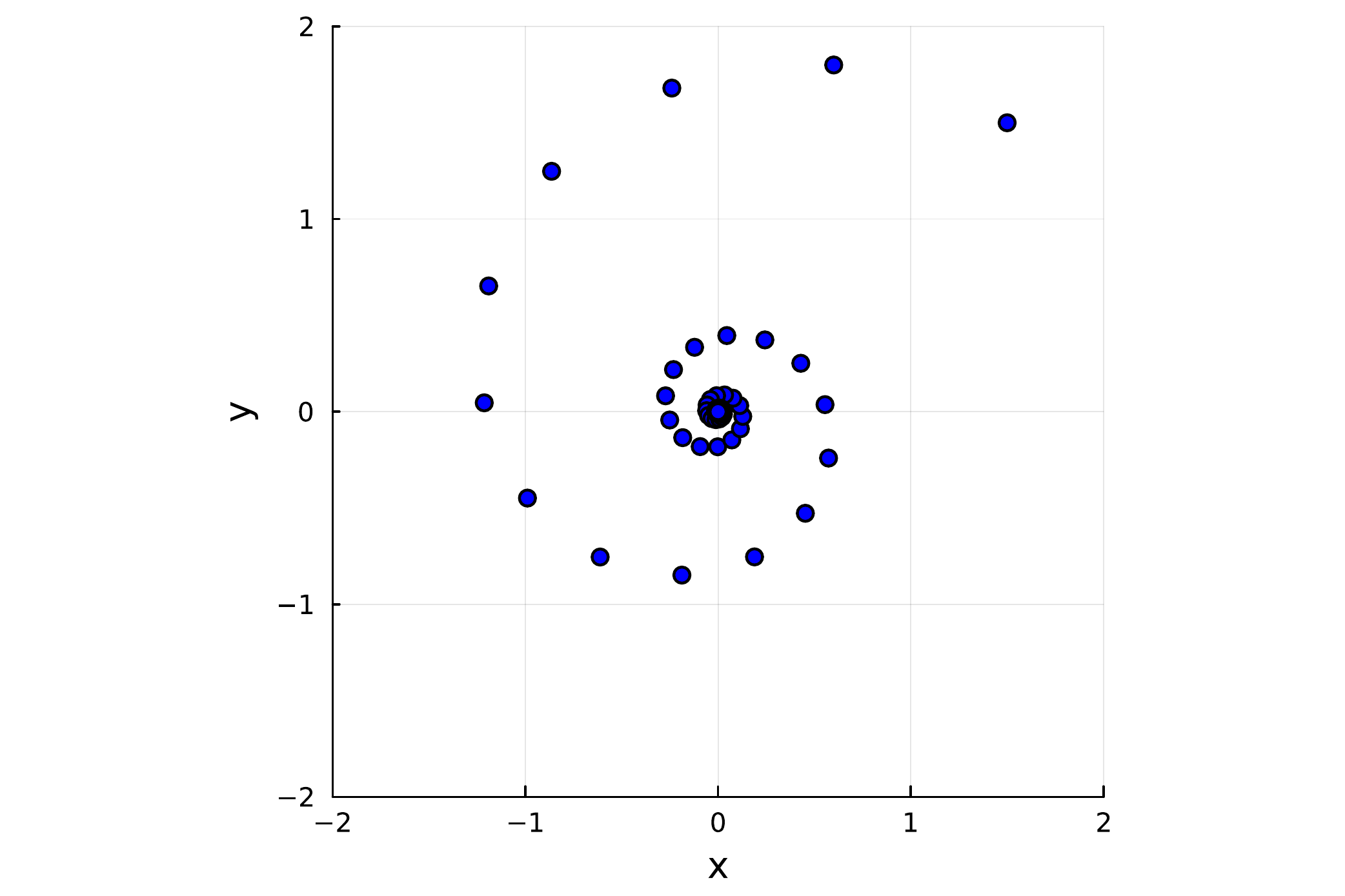}
  \caption{PPM}
  \label{fig:PPM}
\end{subfigure}
\caption{The trajectory of PDHG and PPM to solve a simple unconstrained bilinear problem $\min_{x}\max_{y} xy$.}
\label{fig:bilinear}
\end{figure}

When measuring the distance between $0$ and the set $\mF(z)$, a crucial part is the choice of the norm. The simplest norm which is also used in practice is the $\ell_2$ norm. However, this may not be the natural norm for all algorithms. To understand the intuition, Figure \ref{fig:bilinear} plots the trajectory of PDHG and proximal point method (PPM) to solve a simple unconstrained bilinear problem 
\begin{equation*}
    \min_{x\in \mathbb R}\max_{y\in \mathbb R} \mathcal L(x,y):=xy \ .
\end{equation*}
We can observe that the iterations of two algorithms exhibit spiral patterns toward the unique saddle point $(0,0)$. Nevertheless, in contrast to the circular pattern of PPM, the spiral of PDHG spins in a tilting elliptical manner. The fundamental reason for such an elliptical structure is the asymmetricity of the primal and dual steps in PDHG.  The elliptical structures of the iterates can be captured by the choice of norms in the analysis. This is consistent with the fact that while the analysis of PPM~\cite{rockafellar1976monotone} utilizes $\ell_2$ norm, the analysis of PDHG~\cite{chambolle2016ergodic,applegate2021faster,he2012convergence} often utilizes a different norm defined by $P_s=\begin{pmatrix}
\frac 1s I_n & -A^T \\ -A & \frac 1s I_m
\end{pmatrix}$. \textcolor{black}{The $P_s$ has already been used in analyzing PDHG, for example, \cite{he2012convergence} studies the contraction of PDHG under $P_s$ norm and motivate variants of PDHG from this perspective. We here propose to use $P_s^{-1}$ in the sub-differentiable (dual) space. } 

In previous works, the convergence guarantee of PDHG is often stated with respect to the duality gap~\cite{chambolle2011first,chambolle2016ergodic}, $\max_{\hat x\in \mathcal X, \hat y\in \mathcal Y}\mathcal L(x,\hat y)-\mathcal L(\hat x,y)$, or the distance to the optimal solution set~\cite{applegate2021faster,fercoq2021quadratic}, $dist(z,\mathcal Z^*)$. Unfortunately, these values are barely computable or informative because the iterates are usually infeasible until the end of the algorithms (thus, the duality gap at the iterates is often infinity) and the optimal solution set is unknown to the users beforehand. In practice, people instead use residual as a performance metric because of its computational efficiency.  For example, the KKT residual, i.e., the violation of primal feasibility, dual feasibility, and complementary slackness, is widely used in the LP solver as the performance metric. \textcolor{black}{
Indeed, the KKT residual of LP is upper bounded by IDS upto a constant. More formally,
consider the primal-dual form of standard LP
\begin{equation*}
    \min_{x\geq 0}\max_y\; c^Tx-y^TAx+b^Ty\ ,
\end{equation*}
and the corresponding KKT residual for $z=(x,y)$, $x\geq 0$ is given by $\left\Vert \begin{pmatrix}
        Ax-b \\ [A^Ty-c]^+ \\  [c^Tx-b^Ty]^+
    \end{pmatrix}  \right\Vert_2$. \\
    Then it holds for any $z=(x,y)$, $x\geq 0$ that
    \begin{equation*}
        dist^2(0,\mathcal F(z)) \geq \rho_r^2(z) \geq \frac{1}{1+4R^2}\left\Vert \begin{pmatrix}
        Ax-b \\ [A^Ty-c]^+ \\  [c^Tx-b^Ty]^+
    \end{pmatrix}  \right\Vert_2^2 \ ,
    \end{equation*}
    where $R$ is an upper bound for the norm of iterates $z_k$ and $\rho_r(z)$ is the normalized duality gap defined in \cite[Equation (4a)]{applegate2021faster}. The first inequality follows from \cite[Proposition 5]{applegate2021faster} and the second inequality uses \cite[Lemma 4]{applegate2021faster}. Therefore, IDS under $l_2$ norm is an upper bound of KKT residual for LP. Since all norms in finite-dimensional Hilbert space are equivalent (upto a constant), this showcases that the IDS under $P_s$ norm also provides an upper bound on KKT residual.
    }

Since $\mF(z)$ is in the dual space, we utilized the dual norm of $P_s$ in the definition of IDS. In fact, for the simple bilinear problem in Figure \ref{fig:bilinear}, the use of $P_s^{-1}$ norm is the key to guarantee the monotonic decay of IDS for PDHG iterates. Indeed, this monotonicity result holds for any convex-concave problems and can be formalized in the below proposition: 
\begin{prop}\label{thm:thm-decay}
Consider PDHG (Algorithm \ref{alg:pdhg}) with step-size $s < \frac{1}{\Vert A \Vert}$ for solving a convex-concave minimax problem \eqref{eq:minmax}. It holds for $k\geq 0$ that
\begin{equation*}
    dist_{P_s^{-1}}^2(0,\mathcal F(z_{k+1}))\leq dist_{P_s^{-1}}^2(0,\mathcal F(z_k)) \ .
\end{equation*}
\end{prop}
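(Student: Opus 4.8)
The plan is to exploit the standard fact that PDHG is the proximal point method applied to the monotone operator $\mathcal F$ in the geometry induced by $P_s$, and then to combine the ``natural residual'' bound with a virtual-predecessor trick. Since $s<1/\Vert A\Vert$, a Schur-complement computation gives $P_s\succ 0$, so $\Vert\cdot\Vert_{P_s}$ is a genuine norm and $P_s^{-1}\mathcal F$ is monotone with respect to $\langle\cdot,\cdot\rangle_{P_s}$. First I would read off the sub-gradients produced by the two prox steps: the optimality conditions of $x_{k+1}=\mathrm{prox}_f^{s}(x_k-sA^Ty_k)$ and $y_{k+1}=\mathrm{prox}_g^{s}(y_k+sA(2x_{k+1}-x_k))$ give $\frac1s(x_k-x_{k+1})-A^Ty_k\in\partial f(x_{k+1})$ and $\frac1s(y_k-y_{k+1})+A(2x_{k+1}-x_k)\in\partial g(y_{k+1})$; adding $A^Ty_{k+1}$ to the first and $-Ax_{k+1}$ to the second shows
\[
\gamma_{k+1}:=P_s(z_k-z_{k+1})\in\mathcal F(z_{k+1}) .
\]
Because $\Vert\gamma_{k+1}\Vert_{P_s^{-1}}^2=\langle P_s(z_k-z_{k+1}),z_k-z_{k+1}\rangle=\Vert z_k-z_{k+1}\Vert_{P_s}^2$, this immediately yields the one-sided estimate $dist_{P_s^{-1}}^2(0,\mathcal F(z_{k+1}))\le \Vert z_k-z_{k+1}\Vert_{P_s}^2$.

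The conceptually central step is to bound $\Vert z_k-z_{k+1}\Vert_{P_s}^2$ by $dist_{P_s^{-1}}^2(0,\mathcal F(z_k))$ itself (rather than merely by $\Vert z_{k-1}-z_k\Vert_{P_s}^2$, which is what naive nonexpansiveness would give). Here I would introduce a virtual iterate. Since $\mathcal F(z_k)$ is closed and convex, it has a unique minimal-$\Vert\cdot\Vert_{P_s^{-1}}$-norm element $\xi_k$, so $dist_{P_s^{-1}}^2(0,\mathcal F(z_k))=\Vert\xi_k\Vert_{P_s^{-1}}^2$; set $\hat z_k:=z_k+P_s^{-1}\xi_k$, so that $P_s(\hat z_k-z_k)=\xi_k\in\mathcal F(z_k)$, i.e.\ $z_k$ is itself exactly one PDHG step away from $\hat z_k$. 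Applying monotonicity of $\mathcal F$ to the pair $(z_k,z_{k+1})$ with the sub-gradients $\xi_k\in\mathcal F(z_k)$ and $\gamma_{k+1}\in\mathcal F(z_{k+1})$,
\[
0\le\langle\gamma_{k+1}-\xi_k,\ z_{k+1}-z_k\rangle=-\Vert z_k-z_{k+1}\Vert_{P_s}^2+\langle z_k-\hat z_k,\ z_{k+1}-z_k\rangle_{P_s},
\]
and Cauchy--Schwarz in the $P_s$ inner product gives $\Vert z_k-z_{k+1}\Vert_{P_s}\le\Vert z_k-\hat z_k\Vert_{P_s}=\Vert P_s^{-1}\xi_k\Vert_{P_s}=\Vert\xi_k\Vert_{P_s^{-1}}$. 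Chaining the two estimates,
\[
dist_{P_s^{-1}}^2(0,\mathcal F(z_{k+1}))\le \Vert z_k-z_{k+1}\Vert_{P_s}^2\le\Vert\xi_k\Vert_{P_s^{-1}}^2=dist_{P_s^{-1}}^2(0,\mathcal F(z_k)),
\]
which is the claim.

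I expect the main obstacle to be locating the virtual-point trick. The obstruction to a purely ``firmly nonexpansive resolvent'' argument is that the prox-generated sub-gradient at $z_k$ need not be the minimal-norm element of $\mathcal F(z_k)$ (e.g.\ soft-thresholding returns a boundary sub-gradient), so monotonicity of $\Vert z_{k-1}-z_k\Vert_{P_s}$ alone does not transfer to monotonicity of the $dist$ quantity; realizing that one can ``back up'' from $z_k$ along $P_s^{-1}\xi_k$ to a point $\hat z_k$ from which $z_k$ is a legitimate PDHG iterate is exactly what bridges this gap. The remaining points are routine: the degenerate case $z_{k+1}=z_k$ forces $0\in\mathcal F(z_{k+1})$ so the inequality holds trivially; the case $\mathcal F(z_k)=\emptyset$ makes the right-hand side $+\infty$; and for $k\ge 1$ the prox points lie in $\operatorname{dom}\partial f$ and $\operatorname{dom}\partial g$, so $\mathcal F(z_k)$ is nonempty and the minimal-norm element $\xi_k$ is well defined.
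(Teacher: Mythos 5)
Your proposal is correct and follows essentially the same route as the paper: both rest on the differential-inclusion form $P_s(z_k-z_{k+1})\in\mathcal F(z_{k+1})$ and on applying monotonicity of $\mathcal F$ to this element paired with the minimal-$\Vert\cdot\Vert_{P_s^{-1}}$-norm element of $\mathcal F(z_k)$; your ``virtual iterate'' $\hat z_k$ is just a geometric repackaging of that pairing. The only (harmless) difference is that you finish with Cauchy--Schwarz, which discards the extra decay term $-\Vert \widetilde\omega_{k+1}-\omega_k\Vert_{P_s^{-1}}^2$ that the paper obtains by completing the square and records in its subsequent remark.
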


\begin{proof}
First, notice that the iterate update of Algorithm \ref{alg:pdhg} can be written as
\begin{equation}\label{eq:update}
    P_s(z_k-z_{k+1})\in \mathcal F(z_{k+1}) \ .
\end{equation}
\blue{Note that PDHG is a forward-backward algorithm \cite{combettes2014forward}, we can rewrite the proximal operators in Algorithm \ref{alg:pdhg} as the following} 
\begin{align}\label{eq:iterates}
    \begin{split}
        & \ 0\in \partial f(x_{k+1})+\frac 1s (x_{k+1}-x_k+sA^Ty_k) \\ 
        & \ 0 \in\partial g(y_{k+1})+\frac 1s (y_{k+1}-y_k-sA(2x_{k+1}-x_k)) \ .
    \end{split}
\end{align}
We arrive at \eqref{eq:update} by rearranging \eqref{eq:iterates}:
\begin{align*}
    \begin{split}
    P_s(z_k-z_{k+1}) & \ =\begin{pmatrix}
        \frac 1s I_n & -A^T \\ -A & \frac 1s I_m
    \end{pmatrix}\begin{pmatrix}
        x_k-x_{k+1} \\ y_k-y_{k+1}
    \end{pmatrix}=\begin{pmatrix}
        \frac 1s(x_k-x_{k+1})-A^T(y_k-y_{k+1}) \\  -A(x_k-x_{k+1})+\frac 1s(y_k-y_{k+1})
    \end{pmatrix}\\
    & \ \in \begin{pmatrix} \partial f(x_{k+1})+A^Ty_{k+1} \\ -Ax_{k+1}+\partial g(y_{k+1}) \end{pmatrix}=\mathcal F(z_{k+1}) \ .
    \end{split}
\end{align*}

Now denote $\widetilde\omega_{k+1}=P_s(z_k-z_{k+1})\in \mathcal F(z_{k+1})$, then it holds for any $\omega_k\in\mathcal F(z_k)$ that
\begin{align*}
    \begin{split}
        \Vert \widetilde\omega_{k+1} \Vert_{P_s^{-1}}^2-\Vert \omega_k \Vert_{P_s^{-1}}^2 & = \left \langle \widetilde\omega_{k+1}-\omega_k, P_s^{-1}(\widetilde\omega_{k+1}+\omega_k)\right \rangle \\
        &\leq \left \langle \widetilde\omega_{k+1}-\omega_k, 2(z_{k+1}-z_k)+P_s^{-1}(\widetilde\omega_{k+1}+\omega_k)\right \rangle\\
        & = \left \langle P_s(z_k-z_{k+1})-\omega_k, 2(z_{k+1}-z_k)+P_s^{-1}(P_s(z_k-z_{k+1})+\omega_k)\right \rangle\\
        & = \left \langle P_s(z_k-z_{k+1})-\omega_k, -(z_k-z_{k+1})+P_s^{-1}\omega_k\right \rangle\\
        & = -\Vert z_k-z_{k+1} \Vert_{P_s}^2+2\omega_k^T(z_k-z_{k+1})-\Vert \omega_k \Vert_{P_s^{-1}}^2\\
        & = -\Vert P_s(z_k-z_{k+1})-\omega_k \Vert_{P_s^{-1}}^2\\
        & = -\Vert \widetilde\omega_{k+1}-\omega_k \Vert_{P_s^{-1}}^2 \\
        & \leq 0 \ ,
    \end{split}
\end{align*}
where the inequality utilizes $\mF$ is a monotone operator due to the convexity-concavity of the objective $\mathcal{L}(x,y)$, thus $\left \langle \widetilde\omega_{k+1}-\omega_k, z_{k+1}-z_k\right\rangle\ge 0$, and the second equality uses $\widetilde\omega_{k+1}=P_s(z_k-z_{k+1})$.
Now choose $\omega_k=\arg\min_{w\in \mathcal F(z_k)} \{ \|w\|_{P_s^{-1}}^2\}$, and we obtain
\begin{equation*}
    dist_{P_s^{-1}}^2(0,\mathcal F(z_{k+1}))\leq \Vert \widetilde\omega_{k+1} \Vert_{P_s^{-1}}^2 \leq \Vert \omega_k \Vert_{P_s^{-1}}^2 = dist_{P_s^{-1}}^2(0,\mathcal F(z_k)) \ ,
\end{equation*}
which finishes the proof.
\end{proof}


We end this section by making two remarks on the proof of Proposition \ref{thm:thm-decay}:
\begin{rem}
The proof of Proposition \ref{thm:thm-decay} not only shows the monotonicity of IDS, but also provides a bound on its decay: 
\begin{equation}\label{eq:eq-strict}
    dist_{P_s^{-1}}^2(0,\mathcal F(z_{k+1})) \leq dist_{P_s^{-1}}^2(0,\mathcal F(z_{k}))  -\Vert \widetilde\omega_{k+1}-\omega_k \Vert_{P_s^{-1}}^2 \ , 
\end{equation}
where $\omega_k=\arg\min_{w\in \mathcal F(z_k)} \{ \|w\|_{P_s^{-1}}^2\}\in \mathcal F(z_k)$ and $\widetilde\omega_{k+1}=P_s(z_k-z_{k+1})\in \mathcal F(z_{k+1})$.
\end{rem}

\begin{rem}
    In the proof of Proposition \ref{thm:thm-decay} and the convergence proof in Section \ref{sec:sublinear} and Section \ref{sec:linear}, the only information we use from PDHG is the update rule \eqref{eq:update} and the fact that $P_s$ is a positive definite matrix. This makes it easy to generalize the results of PDHG to other algorithms. For example, PPM to solve the convex-concave minimax problem $\min_{x}\max_{y} \mathcal L(x,y)$ has the following update rule:
    \begin{equation*}
    \frac 1s(z_k-z_{k+1})\in \mathcal F(z_{k+1})\ ,
    \end{equation*}
    and the iterate update is also a special case of \eqref{eq:update} where $P_s=\frac 1s I$ and $\mathcal F(z)=\begin{pmatrix} \partial_x \mathcal{L}(x,y) \\ -\partial_y \mathcal L(x,y) \end{pmatrix}$. Thus, all of the analysis and results we develop herein for PDHG can be directly extended to PPM in parallel.
\end{rem}





\section{Sublinear convergence of IDS}\label{sec:sublinear}
In this section, we present the sublinear convergence rate of IDS. Our major result is stated in Theorem \ref{thm:thm-sublinear}. As a direct consequence, the result implies sublinear rate of non-ergodic iterates on duality gap.


\begin{thm}\label{thm:thm-sublinear}
Consider the iterates $\{z_k\}_{k=0}^{\infty}$ of PDHG (Algorithm \ref{alg:pdhg}) to solve a convex-concave minimax problem \eqref{eq:minmax}. Let $z_*\in \mathcal Z^*$ be an optimal point to \eqref{eq:minmax}. Suppose the step-size satisfies $s<\frac{1}{\Vert A \Vert}$. Then, it holds for any iteration {$k\geq 1$} that
\begin{equation*}
        dist_{P_s^{-1}}^2(0,\mathcal F(z_{k}))\leq  \frac{1}{k} \Vert z_{0}-z_* \Vert_{P_s}^2 \ .
\end{equation*}
\end{thm}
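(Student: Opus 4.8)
The plan is to reduce the statement to three facts that are essentially already in hand: a Fej\'er-type contraction of the PDHG iterates toward $z_*$ in the $P_s$ norm, the monotone decay of the step lengths $\Vert z_k-z_{k+1}\Vert_{P_s}^2$, and the elementary bound $dist_{P_s^{-1}}^2(0,\mathcal F(z_k))\le \Vert z_{k-1}-z_k\Vert_{P_s}^2$. Since $s<\frac{1}{\Vert A\Vert}$ guarantees $P_s\succ 0$, all the $P_s$- and $P_s^{-1}$-norms below are genuine norms, and the update rule \eqref{eq:update} gives $\widetilde\omega_{k+1}:=P_s(z_k-z_{k+1})\in\mathcal F(z_{k+1})$, so in particular $dist_{P_s^{-1}}^2(0,\mathcal F(z_{k+1}))\le \Vert\widetilde\omega_{k+1}\Vert_{P_s^{-1}}^2=\Vert z_k-z_{k+1}\Vert_{P_s}^2$.

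First I would prove the contraction. Because $z_*\in\mathcal Z^*$ we have $0\in\mathcal F(z_*)$, so monotonicity of $\mathcal F$ applied to $\widetilde\omega_{k+1}\in\mathcal F(z_{k+1})$ and $0\in\mathcal F(z_*)$ yields $\langle P_s(z_k-z_{k+1}),\,z_{k+1}-z_*\rangle\ge 0$. Expanding
\[
\Vert z_k-z_*\Vert_{P_s}^2-\Vert z_{k+1}-z_*\Vert_{P_s}^2 = \Vert z_k-z_{k+1}\Vert_{P_s}^2 + 2\langle z_{k+1}-z_*,\,P_s(z_k-z_{k+1})\rangle
\]
and dropping the nonnegative cross term gives $\Vert z_{k+1}-z_*\Vert_{P_s}^2\le \Vert z_k-z_*\Vert_{P_s}^2-\Vert z_k-z_{k+1}\Vert_{P_s}^2$. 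Telescoping this over $j=0,\dots,k-1$ produces the summation bound $\sum_{j=0}^{k-1}\Vert z_j-z_{j+1}\Vert_{P_s}^2\le \Vert z_0-z_*\Vert_{P_s}^2$.

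Next I would upgrade this average-type bound to a pointwise one at the last step by using that the step lengths are non-increasing. Specializing the chain of inequalities in the proof of Proposition \ref{thm:thm-decay} to the choice $\omega_k=\widetilde\omega_k=P_s(z_{k-1}-z_k)\in\mathcal F(z_k)$ gives $\Vert z_k-z_{k+1}\Vert_{P_s}^2=\Vert\widetilde\omega_{k+1}\Vert_{P_s^{-1}}^2\le\Vert\widetilde\omega_k\Vert_{P_s^{-1}}^2=\Vert z_{k-1}-z_k\Vert_{P_s}^2$, so $\{\Vert z_j-z_{j+1}\Vert_{P_s}^2\}_j$ is non-increasing and hence $k\,\Vert z_{k-1}-z_k\Vert_{P_s}^2\le\sum_{j=0}^{k-1}\Vert z_j-z_{j+1}\Vert_{P_s}^2\le\Vert z_0-z_*\Vert_{P_s}^2$. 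Finally, since $\widetilde\omega_k=P_s(z_{k-1}-z_k)\in\mathcal F(z_k)$, we conclude $dist_{P_s^{-1}}^2(0,\mathcal F(z_k))\le\Vert\widetilde\omega_k\Vert_{P_s^{-1}}^2=\Vert z_{k-1}-z_k\Vert_{P_s}^2\le\frac1k\Vert z_0-z_*\Vert_{P_s}^2$, which is the claim. The only point that needs a little care is precisely this last upgrade: the telescoping alone gives a bound on the running average of the squared steps, and it is the monotonicity of the step-length sequence — itself a byproduct of Proposition \ref{thm:thm-decay} — that turns it into the $\frac1k$ bound at iterate $k$; other than that, the argument is routine.
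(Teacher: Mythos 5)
Your proof is correct and follows essentially the same route as the paper: the same Fej\'er-type contraction $\Vert z_{i+1}-z_*\Vert_{P_s}^2\le\Vert z_i-z_*\Vert_{P_s}^2-\Vert z_i-z_{i+1}\Vert_{P_s}^2$ (from monotonicity of $\mathcal F$ and $0\in\mathcal F(z_*)$) is telescoped, and the resulting average-type bound is upgraded to the last iterate via a monotonicity argument. The only cosmetic difference is that you use monotonicity of the fixed-point residuals $\Vert z_{i-1}-z_i\Vert_{P_s}^2=\Vert\widetilde\omega_i\Vert_{P_s^{-1}}^2$ (the special case $\omega_k=\widetilde\omega_k$ of the computation in Proposition \ref{thm:thm-decay}), whereas the paper averages the IDS values $dist_{P_s^{-1}}^2(0,\mathcal F(z_i))$ themselves and invokes Proposition \ref{thm:thm-decay} as stated; both are valid consequences of the same calculation and yield the identical constant.
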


\begin{proof}
Recall the update of PDHG \eqref{eq:update}. Then we have for any $i$ that:
\begin{equation*}
    \widetilde\omega_{i+1}:= P_s(z_i-z_{i+1})\in \mathcal F(z_{i+1}) \ .
\end{equation*}
It holds for any iteration $i\geq 0$ that
\begin{align*}
    \begin{split}
        \Vert z_i-z_* \Vert_{P_s}^2 & \ =\Vert z_i-z_{i+1}+z_{i+1}-z_* \Vert_{P_s}^2\\
        & \ =\Vert z_{i}-z_{i+1} \Vert_{P_s}^2+\Vert z_{i+1}-z_* \Vert_{P_s}^2+2\left\langle z_{i+1}-z_*,P_s(z_i-z_{i+1})\right\rangle\\
        & \ = \Vert \omega_{i+1} \Vert_{P_s^{-1}}^2+\Vert z_{i+1}-z_* \Vert_{P_s}^2+2\left\langle z_{i+1}-z_*,\widetilde\omega_{i+1}-0\right\rangle \\
        & \ \geq \Vert z_{i+1}-z_* \Vert_{P_s}^2+ \Vert \widetilde\omega_{i+1} \Vert_{P_s^{-1}}^2 \ ,
    \end{split}
\end{align*}
where the inequality comes from the monotonicity of operator $\mathcal F$ and notices $0\in \mathcal{F} (z_*)$. Thus, we have 
\begin{equation}\label{eq:eq-sublinear}
    dist_{P_s^{-1}}^2(0,\mathcal F(z_{i+1})) \leq \Vert \widetilde\omega_{i+1} \Vert_{P_s^{-1}}^2 \leq \Vert z_i-z_* \Vert_{P_s}^2-\Vert z_{i+1}-z_* \Vert_{P_s}^2 \ .
\end{equation}

Therefore, it follows that
\begin{align*}
    \begin{split}
        dist_{P_s^{-1}}^2(0,\mathcal F(z_{k})) & \  \leq \frac{1}{k}\sum_{i=1}^{k} dist_{P_s^{-1}}^2(0,\mathcal F(z_i)) \\
        & \ \leq \frac{1}{k}\pran{ \sum_{i=1}^{k} \pran{\Vert z_{i-1}-z_* \Vert_{P_s}^2-\Vert z_{i}-z_* \Vert_{P_s}^2}}\\
        & \ \leq  \frac{1}{k} \Vert z_{0}-z_* \Vert_{P_s}^2 \ ,
    \end{split}
\end{align*}
where the first inequality follows from Proposition \ref{thm:thm-decay} and the second inequality is due to inequality \eqref{eq:eq-sublinear}. 
\end{proof}





As a direct consequence of Theorem \ref{thm:thm-sublinear}, we can obtain $\mathcal O(\frac{1}{\sqrt k})$ convergence rate on the primal-dual gap for the last iteration of PDHG. The last iteration of PDHG has a slower convergence rate compared to the average iteration, which has $\mathcal O(\frac{1}{k})$ rate on the primal-dual gap~\cite{chambolle2016ergodic}. \blue{This is consistent with the recent discovery that the last iteration has a slower convergence than the average iteration in a smooth convex-concave saddle point for the extragradient method~\cite{golowich2020last}, Douglas-Rachford splitting~\cite{davis2017faster,davis2016convergence}, and primal-dual coordinate descent method~\cite{fercoq2019coordinate}. }


\begin{cor}\label{cor:cor-sublinear}
Under the same assumption and notation as Theorem \ref{thm:thm-sublinear}, it holds for any iteration $k\ge 1$, $z=(x,y)\in \mathcal Z$ and any optimal solution $z_*$ that
\begin{equation*}
    \mathcal L(x_k,y)-\mathcal L(x,y_k) \leq \frac{1}{\sqrt k}\pran{\Vert z_0-z_* \Vert_{P_s}^2+\Vert z_{0}-z_* \Vert_{P_s}\Vert z_*-z \Vert_{P_s}} \ .
\end{equation*}
\end{cor}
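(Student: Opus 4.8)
The plan is to convert the IDS bound of Theorem~\ref{thm:thm-sublinear} into a primal-dual gap bound by exploiting convexity of $f$ and $g$ together with the duality pairing between the $P_s$ and $P_s^{-1}$ norms. First I would invoke Theorem~\ref{thm:thm-sublinear}: since $dist_{P_s^{-1}}^2(0,\mathcal F(z_k))\le \tfrac1k\|z_0-z_*\|_{P_s}^2$, there is an element $\omega_k=(\omega_k^x,\omega_k^y)\in\mathcal F(z_k)$ attaining the minimum, so that $\|\omega_k\|_{P_s^{-1}}\le \tfrac{1}{\sqrt k}\|z_0-z_*\|_{P_s}$. By the definition of $\mathcal F$ I write $\omega_k^x=p_k+A^Ty_k$ with $p_k\in\partial f(x_k)$ and $\omega_k^y=-Ax_k+q_k$ with $q_k\in\partial g(y_k)$.

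Next I would bound the gap $\mathcal L(x_k,y)-\mathcal L(x,y_k)$ from above. Expanding $\mathcal L$ and using the subgradient inequalities $f(x)\ge f(x_k)+\langle p_k,x-x_k\rangle$ and $g(y)\ge g(y_k)+\langle q_k,y-y_k\rangle$ gives
\begin{equation*}
\mathcal L(x_k,y)-\mathcal L(x,y_k)\le \langle p_k,x_k-x\rangle+\langle q_k,y_k-y\rangle+\langle Ax_k,y\rangle-\langle Ax,y_k\rangle \ .
\end{equation*}
Substituting $p_k=\omega_k^x-A^Ty_k$ and $q_k=\omega_k^y+Ax_k$, all the purely bilinear terms cancel (a short computation shows $-\langle y_k,Ax_k-Ax\rangle+\langle Ax_k,y_k-y\rangle+\langle Ax_k,y\rangle-\langle Ax,y_k\rangle=0$), leaving the clean estimate
\begin{equation*}
\mathcal L(x_k,y)-\mathcal L(x,y_k)\le \langle \omega_k^x,x_k-x\rangle+\langle \omega_k^y,y_k-y\rangle=\langle \omega_k,z_k-z\rangle \ .
\end{equation*}

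Then I would apply Cauchy--Schwarz in the form $\langle \omega_k,z_k-z\rangle\le \|\omega_k\|_{P_s^{-1}}\|z_k-z\|_{P_s}$ (valid since $P_s\succ 0$, factoring through $P_s^{1/2}$). It remains to control $\|z_k-z\|_{P_s}$: by the triangle inequality $\|z_k-z\|_{P_s}\le \|z_k-z_*\|_{P_s}+\|z_*-z\|_{P_s}$, and inequality~\eqref{eq:eq-sublinear} in the proof of Theorem~\ref{thm:thm-sublinear} shows $\|z_{i+1}-z_*\|_{P_s}\le\|z_i-z_*\|_{P_s}$, hence $\|z_k-z_*\|_{P_s}\le\|z_0-z_*\|_{P_s}$. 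Combining the three bounds yields
\begin{equation*}
\mathcal L(x_k,y)-\mathcal L(x,y_k)\le \tfrac{1}{\sqrt k}\|z_0-z_*\|_{P_s}\bigl(\|z_0-z_*\|_{P_s}+\|z_*-z\|_{P_s}\bigr),
\end{equation*}
which is the claim. The only slightly delicate step is the cancellation of the bilinear terms in the second paragraph; everything else is bookkeeping with monotonicity/non-expansiveness already established earlier. I do not expect any real obstacle, since the corollary is essentially a repackaging of Theorem~\ref{thm:thm-sublinear} via convexity.
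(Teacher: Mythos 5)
Your proposal is correct and follows essentially the same route as the paper's proof: bound the gap by $\langle \omega_k, z_k-z\rangle$ via the subgradient inequalities for $f$ and $g$ (your explicit cancellation of the bilinear terms is exactly what the paper's decomposition $\mathcal L(x_k,y)-\mathcal L(x_k,y_k)+\mathcal L(x_k,y_k)-\mathcal L(x,y_k)$ accomplishes), then apply Cauchy--Schwarz in the $P_s$/$P_s^{-1}$ pairing, Theorem~\ref{thm:thm-sublinear}, the triangle inequality, and the nonexpansiveness $\Vert z_k-z_*\Vert_{P_s}\le\Vert z_0-z_*\Vert_{P_s}$ from \eqref{eq:eq-sublinear}. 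No gaps.
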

\begin{proof}
Denote $u_k\in \partial f(x_k)$, $v_k\in\partial g(y_k)$ and $w_k=\begin{pmatrix} u_k+A^Ty_k \\ v_k-Ax_k \end{pmatrix}\in \mathcal F(z_k)$. Then
\begin{align}\label{eq:gap-IDS}
    \begin{split}
        \mathcal L(x_k,y)-\mathcal L(x,y_k) & \ = \mathcal L(x_k,y)-\mathcal L(x_k,y_k)+\mathcal L(x_k,y_k)-\mathcal L(x,y_k) \\
        & \ \leq  (-v_k+Ax_k)^T(y-y_k)+(u_k+A^Ty_k)^T(x_k-x) \\ 
        & \ = \left\langle w_k, z_k-z \right\rangle \leq \Vert w_k \Vert_{P_s^{-1}} \Vert z_k-z \Vert_{P_s} \ ,
    \end{split}
\end{align}
where the first inequality uses the convexity of $f$ and $g$ and the last one follows from Cauchy-Schwarz inequality.
\textcolor{black}{Notice  \eqref{eq:gap-IDS} holds for any $w_k\in \mathcal F(z_k)$, we can choose $\omega_k=\arg\min_{\omega\in \mathcal F(z_k)}\{\|\omega\|_{P_s^{-1}}^2  \}$ and thus $\|\omega_k\|_{P_s^{-1}}^2=dist_{P_s^{-1}}^2(0,\mathcal F(z_k))$. Then it holds that}
\begin{align*}
    \begin{split}
        \mathcal L(x_k,y)-\mathcal L(x,y_k) & \ \leq dist_{P_s^{-1}}(0,\mathcal F(z_k)) \Vert z_k-z \Vert_{P_s} \\
        & \ \leq \sqrt{\frac{1}{k}}\Vert z_{0}-z_* \Vert_{P_s}\Vert z_k-z \Vert_{P_s}\\
        & \ \leq \frac{1}{\sqrt k}\Vert z_{0}-z_* \Vert_{P_s}\pran{\Vert z_k-z_* \Vert_{P_s}+\Vert z_*-z \Vert_{P_s}}\\
        & \ \leq \frac{1}{\sqrt k}\Vert z_{0}-z_* \Vert_{P_s}\pran{\Vert z_0-z_* \Vert_{P_s}+\Vert z_*-z \Vert_{P_s}} \ .
    \end{split}
\end{align*}
where the second inequality is due to Theorem \ref{thm:thm-sublinear}; the third inequality follows from the triangle inequality, and
the last inequality uses $\Vert z_k-z_* \Vert_{P_s}\leq \Vert z_0-z_* \Vert_{P_s}$.
\end{proof} 


\section{Linear convergence of IDS}\label{sec:linear}
In previous sections, we show that the IDS monotonically decays along iterates of PDHG and has a sublinear convergence rate for a convex-concave minimax problem. In this section, we further show that the IDS has linear convergence if the minimax problem further satisfies a regularity condition. This regularity condition is satisfied by many applications, such as unconstrained bilinear problem, linear programming, strongly-convex-strongly-convex problems, etc.

We begin by introducing the following regularity condition.
\begin{mydef}\label{def:def-linear-condition}
We say that the minimax problem \eqref{eq:minmax}, or equivalently the operator $\mathcal F$, satisfies metric sub-regularity with respect to matrix $P_s$ on set $\mathbb S$ if it holds for any $z\in \mathbb S$ that
\begin{equation}\label{eq:eq-linear-condition}
    \alpha_s dist_{P_s}(z,\mathcal Z^*) \leq dist_{P_s^{-1}}(0,\mathcal F(z)) \ .
\end{equation}
\end{mydef}

\begin{rem}\label{rem:rem-subreg}
We comment that the metric sub-regularity condition we defined above is a special case of the traditional definition of metric sub-regularity~\cite{dontchev2009implicit} with a set-valued function $F(z)$ and a vector $0$. 

\end{rem}

Recently, the metric sub-regularity and related sharpness conditions have been used to analyze primal-dual algorithms~\cite{fercoq2021quadratic,lu2020adaptive,alacaoglu2019convergence, applegate2021faster,yang2016linear,yuan2020discerning}. In particular, it has been shown that strongly-convex-strongly-concave problems satisfy this condition globally, and piecewise linear quadratic functions satisfy this condition on a compact set~\cite{zheng2014metric,latafat2019new}. This includes many examples, such as LASSO, support vector machine, linear programming and quadratic programming on a compact region in the primal-dual formulation. Suppose the step-size satisfies $s\le \frac{1}{2\|A\|}$, one can show that unconstrained bilinear problem and generic linear program satisfy the regularity condition \eqref{eq:eq-linear-condition}. Below we list a few concrete examples that satisfy metric subregularity:

\begin{exam}[Unconstrained bilinear problem]\label{exam:bilinear}
The unconstrained bilinear problem 
\begin{equation}\label{eq:bilinear}
    \min_{x\in \mathbb R^n}\max_{y\in \mathbb R^m} c^Tx+y^TAx-b^Ty 
\end{equation}
satisfies global metric sub-regularity with $\alpha_s=\frac{s}{2}\sigma_{min}^+(A)$ and $\mathbb S=\mathbb R^{m+n}$.
\end{exam}

\begin{exam}[Linear programming]\label{exam:lp}
For any optimal solution $z_*$, the primal-dual form of linear programming
\begin{equation}\label{eq:lp}
    \min_{x\in \mathbb R^n}\max_{y\in \mathbb R^m} c^Tx+\iota_{\mathbb R_+^n}(x)+y^TAx-b^Ty
\end{equation}
satisfies the metric sub-regularity condition on a bounded region $\mathbb S=\{z:\|z-z_*\|_{P_s}\le \|z_0-z_*\|_{P_s}\}$ with $\alpha_s=\frac{s}{2H(K)\sqrt{1+4R^2}}$, where $H(K)$ is the Hoffman constant of the KKT system of the LP and $R=2\Vert z_0-z_* \Vert+\Vert z_* \Vert$. Furthermore, the iterates of PDHG from initial solution $z_0$ stay in the bounded region $\mathbb S$.
\end{exam}

\begin{exam}[Strongly-convex-strongly-concave problem]\label{exam:scsc}
Consider minimax problem: 
\begin{equation}\label{eq:strongly}
    \min_{x\in \mathbb R^n}\max_{y\in \mathbb R^m}  f(x)+\left \langle Ax,y \right \rangle-g(y)
\end{equation}
where $f:\mathbb R^n\rightarrow \mathbb R$, $g:\mathbb R^m\rightarrow \mathbb R$ are $\mu$-strongly convex functions. Then, the problem satisfies the metric sub-regularity condition with $\alpha_s=\frac{s\mu}{4}$.
\end{exam}

The proofs of these examples follow directly from \cite{applegate2021faster,fercoq2021quadratic}. 

The next theorem presents the linear convergence of IDS for PDHG under metric sub-regularity:
\begin{thm}\label{thm:thm-linear}
    Consider the iterations $\{z_k\}_{k=0}^{\infty}$ of PDHG (Algorithm \ref{alg:pdhg}) to solve a convex-concave minimax problem \eqref{eq:minmax}. Suppose the step-size $s<  \frac{1}{\Vert A \Vert}$, and the minimax problem \eqref{eq:minmax} satisfies metric sub-regularity condition \eqref{eq:eq-linear-condition} on a set $\mathbb S$ that contains $\{z_k\}_{k=0}^{\infty}$. Then, it holds for any iteration $k\ge \left\lceil e/\alpha_s^2\right\rceil$ that
    \begin{equation*}
        dist_{P_s^{-1}}^2(0,\mathcal F(z_k))\leq \exp\pran{1-\frac{k}{\left\lceil e/\alpha_s^2\right\rceil}}dist_{P_s^{-1}}^2(0,\mathcal F(z_0)) \ .
    \end{equation*}
\end{thm}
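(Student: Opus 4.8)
The plan is to upgrade the telescoped sublinear estimate from the proof of Theorem~\ref{thm:thm-sublinear} to a geometric (linear) rate by a restart-style bootstrap, using the monotonicity of IDS (Proposition~\ref{thm:thm-decay}) together with the metric sub-regularity condition \eqref{eq:eq-linear-condition}. The first ingredient is already available: for any block $j\le k$, pick $z_*\in\mathcal Z^*$ attaining $\Vert z_j-z_*\Vert_{P_s}=dist_{P_s}(z_j,\mathcal Z^*)$, and sum inequality \eqref{eq:eq-sublinear} over $i=j,\dots,k-1$. The right-hand side telescopes and is bounded by $\Vert z_j-z_*\Vert_{P_s}^2=dist_{P_s}^2(z_j,\mathcal Z^*)$, while by Proposition~\ref{thm:thm-decay} every summand on the left is at least $dist_{P_s^{-1}}^2(0,\mathcal F(z_k))$, so $(k-j)\,dist_{P_s^{-1}}^2(0,\mathcal F(z_k))\le dist_{P_s}^2(z_j,\mathcal Z^*)$. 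Now invoking \eqref{eq:eq-linear-condition} at $z_j\in\mathbb S$, i.e. $dist_{P_s}^2(z_j,\mathcal Z^*)\le \alpha_s^{-2}\,dist_{P_s^{-1}}^2(0,\mathcal F(z_j))$, yields the bootstrap inequality
\[
  dist_{P_s^{-1}}^2(0,\mathcal F(z_k)) \;\le\; \frac{1}{\alpha_s^2\,(k-j)}\, dist_{P_s^{-1}}^2(0,\mathcal F(z_j)) .
\]

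The second step converts this into linear convergence by restarting over blocks of fixed length. Set $T:=\left\lceil e/\alpha_s^2\right\rceil$, so that $\tfrac{1}{\alpha_s^2 T}\le \tfrac1e$. Applying the bootstrap inequality with $j=mT$ and $k=(m+1)T$ gives $dist_{P_s^{-1}}^2(0,\mathcal F(z_{(m+1)T}))\le \tfrac1e\, dist_{P_s^{-1}}^2(0,\mathcal F(z_{mT}))$ for every $m\ge 0$; chaining over $m$ gives $dist_{P_s^{-1}}^2(0,\mathcal F(z_{mT}))\le e^{-m}\, dist_{P_s^{-1}}^2(0,\mathcal F(z_0))$. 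Finally, for a general $k\ge T$ write $m:=\lfloor k/T\rfloor\ge 1$; monotonicity (Proposition~\ref{thm:thm-decay}) gives $dist_{P_s^{-1}}^2(0,\mathcal F(z_k))\le dist_{P_s^{-1}}^2(0,\mathcal F(z_{mT}))\le e^{-m}\, dist_{P_s^{-1}}^2(0,\mathcal F(z_0))$, and $m> k/T-1$ turns this into the claimed bound $\exp\!\big(1-k/\lceil e/\alpha_s^2\rceil\big)\,dist_{P_s^{-1}}^2(0,\mathcal F(z_0))$.

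There is no deep obstacle once Theorem~\ref{thm:thm-sublinear} and Proposition~\ref{thm:thm-decay} are in hand; the one conceptual point to get right is that a \emph{single} application of the bootstrap only reproduces the sublinear rate $dist_{P_s^{-1}}^2(0,\mathcal F(z_k))\le \tfrac{1}{\alpha_s^2 k}\,dist_{P_s^{-1}}^2(0,\mathcal F(z_0))$, so the restart over length-$T$ blocks is genuinely needed to extract the geometric factor $1/e$. The only hypothesis being used beyond the convex-concave setting is that $\{z_k\}\subseteq\mathbb S$, which is exactly what makes \eqref{eq:eq-linear-condition} applicable at each block start $z_{mT}$ (and which holds in Examples~\ref{exam:bilinear}--\ref{exam:scsc} thanks to the non-expansiveness of $dist_{P_s}(z_k,\mathcal Z^*)$). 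The constant $e$ is used purely to make the final estimate take the clean exponential form; any constant strictly larger than $1$ in the definition of $T$ would yield linear convergence with a different base.
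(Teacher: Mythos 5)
Your proof is correct and follows essentially the same route as the paper's: both restart the telescoped sublinear estimate \eqref{eq:eq-sublinear} over blocks of length $T=\left\lceil e/\alpha_s^2\right\rceil$, convert $dist_{P_s}^2(z_j,\mathcal Z^*)$ back into IDS via metric sub-regularity to gain a factor $1/e$ per block, and use the monotonicity of Proposition~\ref{thm:thm-decay} to handle iterations that are not multiples of $T$. The only cosmetic difference is that you chain forward from $z_0$ and absorb the remainder $k-\lfloor k/T\rfloor T$ at the end, whereas the paper recurses backward from $z_k$ in steps of $T$ and applies monotonicity at the initial segment.
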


\begin{proof}
For any iteration $k\geq 1$, suppose $c\left\lceil \frac{e}{\alpha_s^2}\right\rceil \leq k < (c+1)\left\lceil \frac{e}{\alpha_s^2}\right\rceil$ for a non-negative integer $c$.
It follows from Theorem \ref{thm:thm-sublinear} that for any $z_*\in \mathcal{Z}^*$, we have
\begin{align}\label{eq:eq-linear-key-1}
\begin{split}
    dist_{P_s^{-1}}^2(0,\mathcal F(z_k))& \ \leq \frac{1}{{\left\lceil e/ \alpha_s^2 \right \rceil}}\Vert z_{{k-\left\lceil e/ \alpha_s^2 \right \rceil}}-z_* \Vert_{P_s}^2 .
    \end{split}
\end{align}
Taking the minimum over $z_*\in \mathcal{Z}^*$ in the RHS of \eqref{eq:eq-linear-key-1}, we obtain
\begin{align}\label{eq:eq-linear-key}
\begin{split}
    dist_{P_s^{-1}}^2(0,\mathcal F(z_k))& \ \leq \frac{1}{{\left\lceil e/ \alpha_s^2 \right \rceil}}dist_{P_s}^2(z_{{k-\left\lceil e/ \alpha_s^2 \right \rceil}},\mathcal Z^* ) \ \leq \frac{1}{\alpha_s^2 {\left\lceil e/ \alpha_s^2 \right \rceil}}dist_{P_s^{-1}}^2(0,\mathcal F(z_{{k-\left\lceil e/ \alpha_s^2 \right \rceil}})) \\ & \ \leq \frac{1}{ { e}}dist_{P_s^{-1}}^2(0,\mathcal F(z_{{k-\left\lceil e/ \alpha_s^2 \right \rceil}})) \ ,
    \end{split}
\end{align}
where the second inequality utilizes condition \eqref{eq:eq-linear-condition}. Recursively using \eqref{eq:eq-linear-key}, we arrive at
\begin{align*}
    \begin{split}
        dist_{P_s^{-1}}^2(0,\mathcal F(z_k))& \ \leq \frac 1e dist_{P_s^{-1}}^2(0,\mathcal F(z_{k-\left\lceil e/ \alpha_s^2 \right \rceil})) \leq\cdot\cdot\cdot \leq  \pran{\frac 1 e}^{c}dist_{P_s^{-1}}^2(0,\mathcal F(z_{k-c\left\lceil e/ \alpha_s^2 \right \rceil}))\\
        & \ \leq \exp(-c)dist_{P_s^{-1}}^2(0,\mathcal F(z_0))\\
        & \ \leq \exp\pran{1-\frac{k}{\left\lceil e/\alpha_s^2\right\rceil}}dist_{P_s^{-1}}^2(0,\mathcal F(z_{0})) \ .
    \end{split}
\end{align*}
where the last two inequalities are due to the monotonicity of $dist_{P_s^{-1}}^2(0,\mathcal F(z_k))$ (see Proposition \ref{thm:thm-decay}) and $c\left\lceil \frac{e}{\alpha_s^2}\right\rceil \leq k < (c+1)\left\lceil \frac{e}{\alpha_s^2}\right\rceil$, respectively.
\end{proof}

\begin{rem}
Theorem \ref{thm:thm-linear} implies that we need in total $\mathcal O\pran{\pran{\frac{1}{\alpha_s}}^2\log\pran{\frac 1 \epsilon}}$ iterations to find an $\epsilon$-close solution $z$ such that $dist_{P_s^{-1}}^2(0,\mathcal F(z))\leq\epsilon$.
\end{rem}

\begin{rem}
Similar to Corollary \ref{cor:cor-sublinear}, it is straight-forward to derive the linear convergence rate on duality gap for the last iterate of PDHG under metric sub-regularity condition from Theorem \ref{thm:thm-linear}. In particular, it holds that
\begin{align*}
    \begin{split}
        \mathcal L(x_k,y)-\mathcal L(x,y_k) \leq \exp\pran{-\frac{k-\left\lceil e/\alpha_s^2\right\rceil}{2\left\lceil e/\alpha_s^2\right\rceil}}dist_{P_s^{-1}}(0,\mathcal F(z^{0})) \pran{dist_{P_s}(z_0, \mathcal Z^*)+\Vert z_*-z \Vert_{P_s}} \ .
    \end{split}
\end{align*}
\end{rem}

\section{Tightness}
In this section, we demonstrate that the derived sublinear and linear rates of the last iteration of PDHG in previous sections are tight, i.e., there exists an instance on which the obtained convergence rate cannot be improved. \blue{The tightness herein does not refer to a lower bound result, i.e., this argument does not rule out primal-dual first-order algorithms that can achieve even better performance in the decay of IDS.}

\subsection{Linear convergence}

To begin with, Theorem \ref{thm:thm-lb-linear} presents a convex-concave instance that satisfies the metric sub-regularity condition \eqref{eq:eq-linear-condition}, on which the last iteration of PDHG requires at least $\Omega\pran{\pran{\frac{1}{\alpha_s}}^2\log\frac 1\epsilon}$ iterations to identify a solution such that $dist_{P_s^{-1}}^2(0,\mathcal F(z))\leq \epsilon$. Combined with Theorem \ref{thm:thm-linear}, we conclude that for problems satisfying condition \eqref{eq:eq-linear-condition} the linear rate derived in Theorem \ref{thm:thm-linear} is tight up to a constant.
\begin{thm}\label{thm:thm-lb-linear}
For any iteration $k\geq 1$ and step-size $s\leq \frac{1}{2\Vert A \Vert}$, there exist an initial point $z_0$ and a convex-concave minimax problem \eqref{eq:minmax} satisfying the metric sub-regularity condition \eqref{eq:eq-linear-condition} with $\alpha_s<\frac{1}{2}$, such that the PDHG iterates $\{z_k\}$ satisfy
 \begin{equation}\label{eq:eq-thm-tight-sublinear}
     dist_{P_s^{-1}}^2(0,\mathcal F(z_k)) \geq \frac{1}{12}\pran{1-4\alpha_s^2}^{k} dist_{P_s^{-1}}^2(0,\mathcal F(z_0)) \ .
\end{equation}
\end{thm}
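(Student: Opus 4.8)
The plan is to build the simplest possible instance that saturates the linear rate: a one-dimensional (or two-dimensional, primal-dual) unconstrained bilinear problem of the form $\min_{x\in\RR}\max_{y\in\RR}\; \gamma\, xy$ for a suitable scalar $\gamma>0$, since Figure~\ref{fig:bilinear} already signals that PDHG on a bilinear problem spirals toward the saddle with a fixed geometric contraction factor. For such a problem the PDHG iteration \eqref{eq:update} becomes a linear recursion $z_{k+1} = M z_k$ with an explicit $2\times2$ matrix $M$ depending on $s$ and $\gamma$; I would compute $M$ directly from the update $P_s(z_k - z_{k+1}) \in \mathcal F(z_{k+1})$, which here reads $P_s(z_k-z_{k+1}) = \begin{pmatrix} \gamma y_{k+1} \\ -\gamma x_{k+1}\end{pmatrix}$, i.e. $(P_s + J)z_{k+1} = P_s z_k$ where $J = \begin{pmatrix} 0 & \gamma \\ -\gamma & 0\end{pmatrix}$, so $M = (P_s + J)^{-1}P_s$. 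Because $\mathcal F(z_k) = J z_k$ is single-valued, the IDS is simply $\mathrm{dist}_{P_s^{-1}}^2(0,\mathcal F(z_k)) = \|J z_k\|_{P_s^{-1}}^2 = z_k^\top J^\top P_s^{-1} J z_k$, which makes everything a matter of explicit linear algebra.

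The key steps, in order: (i) fix $\gamma = \|A\|$ normalized appropriately (with $A = \gamma$ a scalar, $s\le \tfrac{1}{2\|A\|}$ means $s\gamma \le \tfrac12$), and from Example~\ref{exam:bilinear} the metric sub-regularity constant is $\alpha_s = \tfrac{s}{2}\sigma_{\min}^+(A) = \tfrac{s\gamma}{2} \le \tfrac14 < \tfrac12$, so the instance is admissible and $4\alpha_s^2 = s^2\gamma^2$; (ii) diagonalize or at least compute the eigenvalues of $M$ — they will be a complex-conjugate pair of modulus $\rho$ with $\rho^2$ equal to something like $\tfrac{1}{1+s^2\gamma^2}$ or a closely related expression, and I would verify that $\rho^2 \ge 1 - s^2\gamma^2 = 1-4\alpha_s^2$ using $\tfrac{1}{1+t}\ge 1-t$; (iii) relate $\|J z_k\|_{P_s^{-1}}^2$ at step $k$ to its value at step $0$: since $z_k = M^k z_0$ and $M$ has the spiral structure, for a well-chosen initial point $z_0$ (an eigenvector-like direction, or simply using that the relevant quadratic form is comparable to $\|z_k\|^2$ up to the condition number of $J^\top P_s^{-1}J$ on the invariant subspace) one gets $\|J z_k\|_{P_s^{-1}}^2 \ge c\, \rho^{2k}\|J z_0\|_{P_s^{-1}}^2$; the constant $\tfrac{1}{12}$ in the statement is exactly the room left for that comparison of norms along the orbit.

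The main obstacle I anticipate is step (iii): because $M$ is not $P_s^{-1}$-orthogonal (the spiral is elliptical, not circular), the quantity $\|J M^k z_0\|_{P_s^{-1}}^2$ is not exactly $\rho^{2k}\|Jz_0\|_{P_s^{-1}}^2$ — it oscillates around this geometric envelope. Controlling the oscillation from below by a universal constant is where the $\tfrac{1}{12}$ comes from, and I would handle it by passing to the $2\times2$ real Jordan (rotation-scaling) form of $M$, writing $M^k = \rho^k R_k$ with $R_k$ a bounded family of matrices (products of a fixed rotation with a fixed invertible change of basis), and then lower-bounding $\min_k \|J R_k w\|_{P_s^{-1}}^2 / \|J w\|_{P_s^{-1}}^2$ over the unit circle of directions $w$ by an explicit constant depending only on the (uniformly bounded) condition numbers involved when $s\gamma\le\tfrac12$. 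A cleaner alternative, which I would try first, is to choose $z_0$ in the specific direction that is an eigenvector of the Hermitian form associated to $J^\top P_s^{-1}J$ restricted to the real invariant plane, so that the lower bound degrades by at most the ratio of the two eigenvalues of that restricted form — a ratio one can bound explicitly by $12$ (or better) for all admissible $s$; then \eqref{eq:eq-thm-tight-sublinear} follows immediately with $\rho^{2k}\ge(1-4\alpha_s^2)^k$.
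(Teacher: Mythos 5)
Your proposal is correct and follows essentially the same route as the paper: an unconstrained bilinear instance on which PDHG is an explicit linear recursion whose iteration matrix has a complex-conjugate eigenvalue pair of squared modulus exactly $1-s^2\gamma^2=1-4\alpha_s^2$, so that the (single-valued) IDS tracks the geometric envelope $(1-4\alpha_s^2)^k$ up to a bounded oscillation. The only difference is how that oscillation is controlled: the paper invokes a ready-made orbit lower bound (Proposition \ref{prop:prop-tight}, constant $\tfrac13$ in the $\ell_2$ norm) and then pays a factor $\tfrac14$ via $\tfrac{s}{2}I\preceq P_s^{-1}\preceq 2sI$, whereas your real-Jordan-form argument works directly in the $P_s^{-1}$ metric and does close, with room to spare, since for $s\gamma\le\tfrac12$ one can take $S$ with $\kappa(S^{\top}S)=\tfrac{1+s\gamma}{1-s\gamma}\le 3$ and $\kappa\bigl(J^{\top}P_s^{-1}J\bigr)=\kappa(P_s^{-1})\le 3$, yielding a constant $\tfrac19\ge\tfrac1{12}$.
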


    

To construct tight instances, we utilize the following intermediate result from  \cite[Section 5.2]{applegate2021faster}: 
\begin{prop}\label{prop:prop-tight}
Consider a 2-dimensional vector sequence $\{a_k\}$ such that
\begin{align*}
    \begin{split}
        a_{k+1}=\begin{pmatrix}
            1-2s^2\sigma^2 & -s\sigma\\
            s\sigma & 1
        \end{pmatrix}a_k \ ,
    \end{split}
\end{align*}
with $\sigma>0$ and $s \sigma < 1/2$. Then, it holds for any $k\geq 0$ that
\begin{equation*}
    \left\Vert a_k \right\Vert_2^2 \geq \frac 13 (1-s^2\sigma^2)^k \left\Vert a_0 \right\Vert_2^2 \ .
\end{equation*}
\end{prop}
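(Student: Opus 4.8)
The plan is to exhibit an explicit Lyapunov-type quadratic form that is contracted \emph{exactly} (geometrically) by the iteration matrix, and then to sandwich the Euclidean norm between that form and itself. Write $t := s\sigma$, so the hypothesis reads $t \in (0, 1/2)$ and the recursion is $a_{k+1} = M a_k$ with
\begin{equation*}
    M = \begin{pmatrix} 1-2t^2 & -t \\ t & 1 \end{pmatrix} \ .
\end{equation*}
A quick computation of the characteristic polynomial gives eigenvalues $(1-t^2) \pm i\,t\sqrt{1-t^2}$, both of modulus $\sqrt{1-t^2}$; this already identifies $(1-t^2)^k$ as the correct rate for $\Vert a_k \Vert_2^2$. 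However, $M$ is not normal, so a purely spectral argument does not control the transient shrinkage of $\Vert a_k \Vert_2$ below $|\lambda|^{2k}\Vert a_0 \Vert_2^2$; the constant $\tfrac13$ is precisely what accounts for this non-normality.

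The \textbf{key step} is to find a positive-definite $Q$ satisfying the exact relation $M^T Q M = (1-t^2)\,Q$, so that $V(a) := a^T Q a$ obeys $V(a_{k+1}) = (1-t^2)V(a_k)$ and hence $V(a_k) = (1-t^2)^k V(a_0)$ with no slack whatsoever. Matching the $(1,1)$, $(1,2)$ and $(2,2)$ entries of this symmetric matrix equation yields a small linear system that forces $Q$ to be proportional to
\begin{equation*}
    Q = \begin{pmatrix} 1 & t \\ t & 1 \end{pmatrix} \ ,
\end{equation*}
and I would simply verify $M^T Q M = (1-t^2)Q$ directly by multiplying out the $2\times 2$ matrices. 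Since $t < 1$, we have $\det Q = 1-t^2 > 0$ and the trace of $Q$ equals $2 > 0$, so $Q \succ 0$; its eigenvalues are $1+t$ and $1-t$, which give the two-sided bound $(1-t)\Vert a \Vert_2^2 \leq V(a) \leq (1+t)\Vert a \Vert_2^2$.

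Combining these pieces finishes the argument:
\begin{equation*}
    \Vert a_k \Vert_2^2 \geq \frac{V(a_k)}{1+t} = \frac{(1-t^2)^k}{1+t}\,V(a_0) \geq \frac{1-t}{1+t}\,(1-t^2)^k\,\Vert a_0 \Vert_2^2 \ .
\end{equation*}
The quantitative constant comes from observing that $t \mapsto \tfrac{1-t}{1+t}$ is decreasing on $[0,1/2]$, so $\tfrac{1-t}{1+t} \geq \tfrac{1-1/2}{1+1/2} = \tfrac13$ whenever $t = s\sigma \leq 1/2$, which holds by hypothesis. I expect the only genuinely non-mechanical part to be discovering the invariant form $Q$; once the ansatz $\begin{pmatrix} 1 & t \\ t & 1\end{pmatrix}$ is in hand, both the contraction identity and the eigenvalue sandwich reduce to routine $2\times 2$ computations. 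It is worth noting that the bound is essentially tight in the sense that equality in $\tfrac{1-t}{1+t}\geq\tfrac13$ is approached as $t \to 1/2$, matching the threshold $s\sigma < 1/2$ in the hypothesis.
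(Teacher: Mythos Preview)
Your proof is correct. The identity $M^T Q M = (1-t^2)Q$ with $Q = \begin{pmatrix}1 & t\\ t & 1\end{pmatrix}$ checks out by direct multiplication, and the eigenvalue sandwich $(1-t)I \preceq Q \preceq (1+t)I$ together with the monotonicity of $t\mapsto \tfrac{1-t}{1+t}$ delivers the constant $\tfrac13$ exactly as you describe.

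There is nothing to compare against: the paper does not prove this proposition but simply imports it as an intermediate result from \cite[Section~5.2]{applegate2021faster}. Your argument is therefore a self-contained addition rather than an alternative to anything in the present paper. The Lyapunov-form approach you chose is clean and has the merit of making the source of the constant $\tfrac13$ transparent as the condition number $\tfrac{1+t}{1-t}$ of $Q$ evaluated at the boundary $t=1/2$.
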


\begin{proof}[Proof of Theorem \ref{thm:thm-linear}]
Let $f(x)=g(y)=0$, $x\in \mathbb R^m,y\in \mathbb R^m$ and $A=\text{diag}(\sigma_1, ..., \sigma_m)\in \mathbb R^{m\times m}$ with $0<\sigma_1\leq ...\leq \sigma_m$. Set the initial point $z_0=e_1=(1,0,\ldots,0)\in \mathbb R^{m+n}$ as the first standard basis vector. Then, we have  $$dist_{P_s^{-1}}^2(0,\mathcal F(z_0))=\Vert F(z_0) \Vert_{P_s^{-1}}^2\geq \frac s2 \Vert F(z_0) \Vert_2^2 > 0\ ,$$
where the first inequality uses $P_s^{-1}\succeq \frac{s}{2}I$. Furthermore, the PDHG update can be rewritten following from \eqref{eq:update} as
\begin{equation*}
    \begin{pmatrix}
        I & -sA^T \\ -sA & I
    \end{pmatrix}
    z_{k+1}=\begin{pmatrix}
        I & -sA^T \\ -sA & I
    \end{pmatrix}z_k-s
    \begin{pmatrix}
        0 & A^T \\ -A & 0
    \end{pmatrix}z_{k+1} \ ,
\end{equation*}

\textcolor{black}{
    which is equivalent to the update rule
    \begin{equation}\label{eq:update-lb}
    z_{k+1}=\begin{pmatrix}
        I & 0 \\ -2sA & I
    \end{pmatrix}^{-1}\begin{pmatrix}
        I & -sA^T \\ -sA & I
    \end{pmatrix}z_k = \begin{pmatrix}
        I & -sA^T \\ sA & I-2s^2AA^T
    \end{pmatrix}z_k \ ,
    \end{equation}
}
\textcolor{black}{
thus it holds that
\begin{align*}
\begin{split}
    F(z_{k+1})=\begin{pmatrix}
        0 & A^T \\ -A & 0
    \end{pmatrix}
    z_{k+1}& \ = \begin{pmatrix}
        0 & A^T \\ -A & 0
    \end{pmatrix}\begin{pmatrix}
        I & -sA^T \\ sA & I-2s^2AA^T
    \end{pmatrix}z_k \\    
    & \ =\begin{pmatrix}
        0 & A^T \\ -A & 0
    \end{pmatrix}\begin{pmatrix}
        I & -sA^T \\ sA & I-2s^2AA^T
    \end{pmatrix}\begin{pmatrix}
        0 & -A^{-1} \\ A^{-T} & 0
    \end{pmatrix}\begin{pmatrix}
        0 & A^T \\ -A & 0
    \end{pmatrix}z_k\\
    & \ =\begin{pmatrix}
        I-2s^2A^TA & -sA^T \\ sA & I
    \end{pmatrix}\begin{pmatrix}
        0 & A^T \\ -A & 0
    \end{pmatrix}z_k=\begin{pmatrix}
        I-2s^2A^TA & -sA^T \\ sA & I
    \end{pmatrix}F(z_k) \ .
\end{split}
\end{align*}
where in the second equality we plug in the update rule \eqref{eq:update-lb} and the third equality uses the non-singularity of matrix $A$. 
}

Notice that $A$ is a diagonal matrix, thus we have
\begin{align}\label{eq:eq-tight}
    \begin{split}
        \Vert F(z_{k})\Vert_{P_s^{-1}}^2 & \ \geq \frac s2 \Vert F(z_{k}) \Vert_2^2 \\
        & \ \geq \frac s2 \pran{(F(z_k)_1)^2+(F(z_k)_{m+1})^2} \\
        & \ \geq \frac s6 (1-s^2\sigma_1^2)^{k}\pran{(F(z_0)_1)^2+(F(z_0)_{m+1})^2}\\
        & \ =\frac s6 (1-s^2\sigma_1^2)^{k}\Vert F(z_{0}) \Vert_2^2 \\
        & \ \geq \frac{1}{12} (1-s^2\sigma_1^2)^{k} \Vert F(z_{0}) \Vert_{P_s^{-1}}^2 \ ,
    \end{split}
\end{align}
where \textcolor{black}{$F(z)_i$ denotes the $i$-th coordinate of gradient vector $F(z)$.} The first and last inequalities are due to the choice of step-size $s$ such that $\frac s2 I\preceq P_s^{-1} \preceq 2sI$, and the third inequality follows from Proposition \ref{prop:prop-tight} and the fact that $A$ is diagonal.

We finish the proof by noticing for this instance that $\alpha_s=\frac{s}{2}\sigma_{min}^+(A)=\frac{s}{2}\sigma_1$ (see Example 1).
\end{proof}

\subsection{Sublinear convergence}
Next, Theorem \ref{thm:thm-lb-sublinear} presents a convex-concave instance on which PDHG requires at least $\Omega\pran{\frac{1}{\epsilon}}$ to achieve a solution such that $dist_{P_s^{-1}}^2(0,\mathcal F(z))\leq \epsilon$. Combined with Theorem \ref{thm:thm-sublinear}, we conclude that the sublinear rate derived in Theorem \ref{thm:thm-sublinear} is tight upto a constant.

\begin{thm}\label{thm:thm-lb-sublinear}
For any iteration $k \geq 1$ and step-size $s= \frac{1}{c\Vert A \Vert}$ with $c\geq 2$, there exist a convex-concave minimax problem \eqref{eq:minmax} and an initial point $z_0$ such that the PDHG iterates $\{z_k\}$ satisfy
 \begin{equation}\label{eq:eq-thm-tight-linear}
     dist_{P_s^{-1}}^2(0,\mathcal F(z_k)) \geq \frac{1}{48c^2e^{4/c^2}}\frac{\Vert z_0-z_* \Vert_{P_s}^2}{k}\ .
\end{equation}
\end{thm}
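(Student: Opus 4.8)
The plan is to reuse the bilinear construction behind Theorem~\ref{thm:thm-lb-linear}: take $f=g=0$ and $A$ diagonal, so that the (single-valued) gradient $F(z)=(A^Ty,-Ax)$ evolves under PDHG through a block-diagonal linear map, each $2\times 2$ block $(F(z_k)_i,F(z_k)_{m+i})$ being driven by exactly the matrix of Proposition~\ref{prop:prop-tight} with $\sigma=\sigma_i$. The only genuinely new idea compared with the linear lower bound is that, since the instance is allowed to depend on $k$, we can \emph{tune} the relevant spectral parameter so that the geometric factor $(1-s^2\sigma_i^2)^k$ is of order $1/k$ at the prescribed horizon $k$. A single mode is useless here, because $s=1/(c\Vert A\Vert)$ forces $s^2\sigma^2=1/c^2$ and hence pure geometric decay; so I would use two modes --- a ``large'' one whose only role is to pin down $\Vert A\Vert$ and therefore $s$, and a ``small'' one, tuned to $k$, carrying all of the slowly-decaying mass.

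Concretely, take $n=m=2$ and $A=\text{diag}(\sigma_1,\sigma_2)$ with $\sigma_2=\Vert A\Vert=1$ (using scale-invariance) and $\sigma_1=2/\sqrt{k}<1$ for $k\ge 5$; then $s=1/c$ and $s^2\sigma_1^2=4/(c^2k)$. Set $z_0=e_1$. As $A$ is invertible and $f=g=0$, the unique saddle is $z_*=0$, so $\Vert z_0-z_*\Vert_{P_s}^2=e_1^T P_s e_1=1/s$; moreover all gradient content stays in the first $2\times 2$ block (the second block starts and remains at $0$), so Proposition~\ref{prop:prop-tight} gives $\Vert F(z_k)\Vert_2^2\ge\tfrac13(1-s^2\sigma_1^2)^k\sigma_1^2$. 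Substituting $\sigma_1^2=4/(c^2s^2k)$, using $\tfrac s2 I\preceq P_s^{-1}$ (valid since $c\ge 2$), and using the elementary bound $(1-\tfrac{4}{c^2k})^k\ge e^{-4/c^2-1}$ (legitimate because $\tfrac{4}{c^2k}\le\tfrac14$ in this range), one obtains $dist_{P_s^{-1}}^2(0,\mathcal F(z_k))=\Vert F(z_k)\Vert_{P_s^{-1}}^2\ge\tfrac s6(1-\tfrac4{c^2k})^k\sigma_1^2=\tfrac{2}{3sc^2k}(1-\tfrac4{c^2k})^k\ge\tfrac{2}{3e\,c^2e^{4/c^2}}\cdot\tfrac{1/s}{k}\ge\tfrac{1}{48c^2e^{4/c^2}}\cdot\tfrac{\Vert z_0-z_*\Vert_{P_s}^2}{k}$, which is the claim; the $e^{4/c^2}$ factor is precisely the trace of the $(1-\tfrac4{c^2k})^k$ estimate, and this particular choice $\sigma_1=2\Vert A\Vert/\sqrt k$ is what reproduces that exponent. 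The small cases $k\in\{1,2,3,4\}$, where $2/\sqrt k$ would exceed $\Vert A\Vert$, are handled by instead choosing $\sigma_1=\sigma_2/2$, so that $s^2\sigma_1^2=\tfrac{1}{4c^2}$ and the residual factor $(1-\tfrac{1}{4c^2})^k\ge(15/16)^4$ comfortably dominates $\tfrac{1}{2e^{4/c^2}k}$ for $k\le 4$.

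The only real work, and the only place anything can go wrong, is the bookkeeping: one must verify the hypothesis $s\sigma_1<\tfrac12$ of Proposition~\ref{prop:prop-tight} throughout (it follows from $\sigma_1\le\sigma_2=\Vert A\Vert$ together with $c\ge 2$, with strictness ensured by keeping $\sigma_1<\sigma_2$, which is exactly why the small-$k$ regime uses $\sigma_1=\sigma_2/2$ and not $\sigma_1=\sigma_2$), track the absolute constants through $\tfrac s2 I\preceq P_s^{-1}\preceq 2sI$ and $\Vert z_0-z_*\Vert_{P_s}^2=1/s$, and check the single numerical inequality $\tfrac{2}{3e}\ge\tfrac1{48}$ that closes the constants with room to spare. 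There is no conceptual obstacle beyond the observation that letting the instance depend on $k$ is what turns a family of \emph{linearly} converging bilinear problems into a matching $\Omega(1/k)$ lower bound for the last iterate --- the same mechanism underlying last-iterate lower bounds for smooth minimax problems.
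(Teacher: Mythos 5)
Your proposal is correct and follows essentially the same route as the paper: the same bilinear instance with a diagonal $A$ whose smallest singular value is tuned to $\Theta(\Vert A\Vert/\sqrt{k})$, the same reduction to Proposition~\ref{prop:prop-tight} on the slow $2\times 2$ block, and the same norm-equivalence bookkeeping via $\tfrac{s}{2}I\preceq P_s^{-1}\preceq 2sI$. The only substantive differences are cosmetic — you place all the mass on the slow mode and compute $\Vert z_0-z_*\Vert_{P_s}^2$ exactly, and you replace the paper's estimate $(1-\tfrac{4}{c^2k})^k\ge e^{-4/c^2}$ (which actually fails for small $k$) by the safe $e^{-4/c^2-1}$ together with an explicit small-$k$ case, which if anything tightens the argument.
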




\begin{proof}
To prove Theorem \ref{thm:thm-lb-sublinear}, we utilize the same instance constructed in Theorem \ref{thm:thm-lb-linear} with carefully chosen singular values of the matrix $A$. In particular, we set $\sigma_1=\frac{L_A}{\sqrt k}$ and $\sigma_2=...=\sigma_m=L_A$. Then we have $\|A\|=L_A$ and $\alpha_s=\frac{\sigma_1}{c\sigma_m}=\frac {1}{c\sqrt k}$.

It follows from Theorem \ref{thm:thm-lb-linear} that
\begin{equation*}
    dist_{P_s^{-1}}^2(0,\mathcal F(z_k))=\Vert F(z_k) \Vert_{P_s^{-1}}^2 \geq \frac{1}{12}\pran{1-4\alpha_s^2}^{k} \Vert F(z_0) \Vert_{P_s^{-1}}^2 \geq \frac{s}{24}\pran{1-\frac{4}{c^2k}}^{k} \Vert F(z_0) \Vert_2^2 \ ,
\end{equation*}
where the first inequality is from Theorem \ref{thm:thm-lb-linear} and the last inequality uses $P_s^{-1}\succeq \frac s2 I$.
Furthermore, notice that
\begin{equation*}
    \Vert F(z_0) \Vert^2=\left\Vert \begin{pmatrix}
        0 & A^T \\ A & 0
    \end{pmatrix}z_0 \right\Vert_2^2 = z_0^T \begin{pmatrix}
        A^TA & 0 \\ 0 & AA^T
    \end{pmatrix}z_0 \geq \sigma_1^2\Vert z_0 \Vert_2^2 \ ,
\end{equation*}
it thus holds that
\begin{align}\label{eq:lower1}
    \begin{split}
        dist_{P_s^{-1}}^2(0,\mathcal F(z_k)) 
          \geq \frac{s}{24}\pran{1-\frac{4}{c^2k}}^{k} \left(\frac{L_A}{\sqrt k}\right)^2\Vert z_0 \Vert_2^2
         = \frac{s}{24}\pran{1-\frac{4}{c^2k}}^{k} \frac{L_A^2}{k}\Vert z_0-z_* \Vert_2^2 \ .
    \end{split}
\end{align}
Meanwhile, it holds that
{\small
\begin{align}\label{eq:lower2}
    \begin{split}
          \frac{s}{24}\pran{1-\frac{4}{c^2k}}^{k} \frac{L_A^2}{k}\Vert z_0-z_* \Vert_2^2
          \ \geq \frac{s}{24e^{4/c^2}}\frac{L_A^2\Vert z_0-z_* \Vert_2^2}{k} 
          \ \geq \frac{s^2}{48e^{4/c^2}}\frac{L_A^2\Vert z_0-z_* \Vert_{P_s}^2}{k}
          \ =\frac{1}{48c^2e^{4/c^2}}\frac{\Vert z_0-z_* \Vert_{P_s}^2}{k} \ ,
    \end{split}
\end{align}}

where the first inequality uses $\pran{1-\frac{4}{c^2k}}^k\geq e^{-4/c^2}$, the second inequality is from $P_s\preceq \frac 2s I$, and the last equality follows from the choice of step-size $s=\frac{1}{c\Vert A \Vert}\leq \frac{1}{2\Vert A \Vert}$.
We finish the proof by combining \eqref{eq:lower1} and \eqref{eq:lower2}.
\end{proof}

\blue{
\section{Efficient Computation and Numerical Results}
In this section, we present a linear-time algorithm to compute IDS for a general problem \eqref{eq:minmax}, and present preliminary numerical experiments on linear programming that validate our theoretical results.

\subsection{Linear-time computation of IDS}
Computing IDS \eqref{eq:IDS} involves computing the projection of $0$ onto the sub-differential set $\mathcal F(z)$ with respect to the norm $\Vert \cdot \Vert_{P_s^{-1}}$, which may not always be trivial. Here, we present a principle approach to efficiently compute IDS in linear time by using Nesterov's accelerated gradient descent (AGD), under the assumption that the non-smooth part of $\mathcal{F}$ is simple. In our numerical experiments, usually 15 iterations of AGD can lead to a high-resolution solution to IDS. 


Consider the convex-concave minimax problem \eqref{eq:minmax}. The sub-differential at a solution $z$ is given by
\begin{equation*}
    \mathcal F(z)=\mathcal F(x,y)=\begin{pmatrix}
        A^Ty \\ -Ax 
    \end{pmatrix}+\begin{pmatrix}
        \partial f(x) \\ \partial g(y)
    \end{pmatrix} \ .
\end{equation*}
We denote $d(z):=\begin{pmatrix}
        A^Ty \\ -Ax 
    \end{pmatrix}$ and $\mathcal G(z):=\begin{pmatrix}
        \partial f(x) \\ \partial g(y)
    \end{pmatrix}$, then the projection problem can be formulated as solving the following quadratic programming:
\begin{align}\label{eq:effiecient}
    \begin{split}
        \min_{\omega\in \mathcal G} \left\Vert \omega+ d \right\Vert_{P_s^{-1}}^2= \omega^T P_s^{-1}\omega+2d^TP_s^{-1}\omega+d^TP_s^{-1}d  \ .
    \end{split}
\end{align}

For most of the applications of PDHG, the sub-gradient set $\mathcal G(z)$ as well as the projection onto $\mathcal G(z)$ are straight-forward to compute. For example, when $f(x)=\iota_{\mathcal C}(x)$ is an indicator function of closed convex set $\mathcal C$, the  subdifferential is its normal cone, that is, $\partial f(x)=N_{\mathcal C}(x)$. If $f(x)=\|x\|_1$ is $l_1$ norm, then $\partial f(x)=\{ g:\|g\|_{\infty}\leq 1, g^Tx=\|x\|_1 \}$.

Notice that \eqref{eq:effiecient} is a convex quadratic programming. A natural algorithm for solving the QP \eqref{eq:effiecient} is Nesterov's accelerated gradient descent, which we formally present in Algorithm \ref{alg:agd}. Recall that throughout the section, we assume the step size $s$ satisfies $s\leq \frac{1}{2\Vert A \Vert}$. Thus, we have $\frac s2 I\preceq P_s^{-1} \preceq 2sI$ and the condition number of $P_s^{-1}$ is $\kappa(P_s^{-1})\leq 4$. As the result, this QP is easy to solve and the complexity of AGD for solving the problem is  $\mathcal O\pran{\log\frac 1 \epsilon}$ \cite{nesterov2003introductory}. Since the complexity is independent of the dimension of the problem, we have a linear time computation of IDS. In our numerical experiments, we found that it usually takes less than 15 iterations of Nesterov's accelerated methods to solve the subproblem \eqref{eq:effiecient} almost exactly, which further verifies the effectiveness of AGD.



\begin{algorithm}
    \renewcommand{\algorithmicrequire}{\textbf{Input:}}
    \caption{Accelerated Gradient Descent for solving \eqref{eq:effiecient}}
    \label{alg:agd}
    \begin{algorithmic}[1]
        \REQUIRE Initial point $w_0$, step-size $\beta$, condition number $\kappa$.
        \FOR{$k=0,1,...$}
        \STATE $w_{k+1}=\text{Proj}_{\mathcal G}\pran{u_k-\frac 2 \beta P_s^{-1}(u_k+d)}$
        \STATE $u_{k+1}=w_{k+1}+\frac{\sqrt\kappa-1}{\sqrt\kappa+1}(w_{k+1}-w_k)$
        \ENDFOR
    \end{algorithmic}
\end{algorithm}

\subsection{Numerical experiments on PDHG for LP}

Here, we present preliminary numerical results on PDHG for LP that verify our theoretical results in the previous sections. 

\textbf{Datasets.} We utilize the root-node LP relaxation of three datasets from \texttt{MIPLIB}, \texttt{enlight\_hard}, \texttt{supportcase14} and \texttt{nexp-50-20-4-2}. The dimension (i.e. $n$ and $m$) of the three datasets are summarized in Table~\ref{table:agd}. These instances are chosen such that vanilla PDHG converges to an optimal solution within a reasonable number of iterations, and similar results hold for other instances.

\textbf{Implementation details.} We run PDHG (Algorithm \ref{alg:pdhg}) on the three instances from MIPLIB with step-size $s=\frac{1}{2\Vert A\Vert}$. We calculate IDS using AGD (Algorithm \ref{alg:agd}). We terminate Algorithm \ref{alg:agd} when the first order optimality condition of \eqref{eq:effiecient} holds with $10^{-10}$ tolerance (this effectively means we solve the subproblem \eqref{eq:effiecient} up to the machine precision). We summarize the average iterations of AGD in Table \ref{table:agd}.

\textbf{Results.} Figure \ref{fig:size} presents IDS and KKT residual versus number of iterations of PDHG for the three LP instances. To make them at the same scale, we square KKT residual (recall that the definition of IDS also has a square). As we can see, IDS monotonically decays, which verifies Proposition \ref{thm:thm-decay}. In contrast, there can be a huge fluctuation in KKT residual, which showcases that IDS is a more natural progress metric for PDHG. Furthermore, as we can see, the decay of IDS follows a linear rate, which verifies our theoretical results in Theorem \ref{thm:thm-linear}. 


\begin{table}
\centering
\begin{tabular}{ |c|c|c| } 
 \hline
 Instance & Size & Average iteration of AGD \\  \hline
 \texttt{enlight\_hard} & $100\times 200$ & 12.6  \\ \hline
 \texttt{supportcase14} & $234\times 304$ & 15.0  \\
 \hline
 \texttt{nexp-50-20-4-2} & $540\times 1225$ & 13.04  \\
 \hline
\end{tabular}
\caption{Test instance sizes and average iteration numbers of Algorithm \ref{alg:agd}}
\label{table:agd}
\end{table}

\begin{figure}
\centering
\begin{subfigure}{0.32\textwidth}
  \centering
  \includegraphics[width=\linewidth]{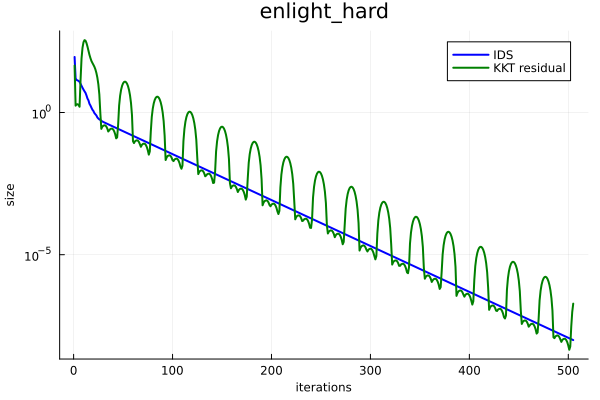}
  \caption{enlight\_hard}
  \label{fig:enlighthard}
\end{subfigure}
\begin{subfigure}{0.32\textwidth}
  \centering
  \includegraphics[width=\linewidth]{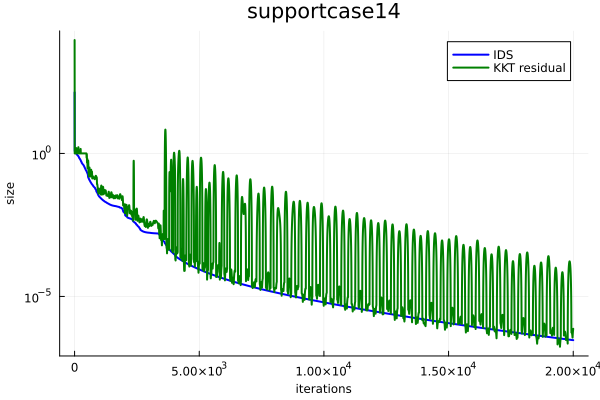}
  \caption{supportcase14}
  \label{fig:supportcase14}
\end{subfigure}
\begin{subfigure}{0.32\textwidth}
  \centering
  \includegraphics[width=\linewidth]{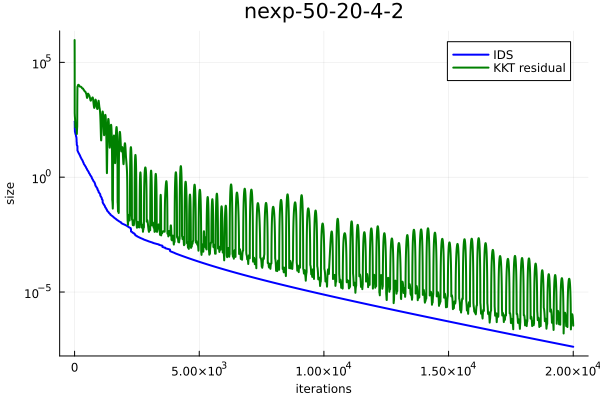}
  \caption{nexp-50-20-4-2}
  \label{fig:nexp}
\end{subfigure}
\caption{Infimal sub-differential size v.s. iterations}
\label{fig:size}
\end{figure}


\section{Extensions to Other Algorithms}
The results on IDS in Section  \ref{sec:sec-decay-sublinear}-\ref{sec:linear} are not limited to PDHG. In this section, we showcase how to extend these results to three classic algorithms, proximal point methods (PPM), alternating direction method of multipliers (ADMM) and linearized ADMM (l-ADMM).

The cornerstone of the analysis in Section \ref{sec:sec-decay-sublinear}-\ref{sec:linear} is to identify a positive definite matrix $\mathcal P$ such that the update rule for iterate $z_{k+1}$ has the form
\begin{equation}\label{eq:cornerstone}
    \mathcal P(z_k-z_{k+1}) \in \mathcal F(z_{k+1}) \ .
\end{equation}
This is the only fact of PDHG that we used in our analysis. Indeed,
if an algorithm can be formulated as \eqref{eq:cornerstone}, we can define IDS as
\begin{equation}\label{eq:general-IDS}
        dist_{\mathcal P^{-1}}^2(0,\mathcal F(z)) \ ,
\end{equation}
and the corresponding monotonicity (Proposition \ref{thm:thm-decay}) and complexity analysis of IDS (Theorem \ref{thm:thm-sublinear} and Theorem \ref{thm:thm-linear}) can be shown with exactly the same analysis as PDHG. 

In the rest of this section, we show how to reformulate the iterate updates of PPM, ADMM and l-ADMM as an instance of \eqref{eq:cornerstone}, and present the corresponding results for the three algorithms. A mild difference between ADMM and the other algorithms is that the matrix $\mathcal P$ in ADMM is not full rank, and the inverse is not well-defined. We overcome this by defining IDS along the subspace where $\mathcal P$ is full rank.

We also would like to comment that positive semidefinite matrix $\mathcal{P}$ of different algorithms can be different. For example, $\mathcal{P}= \begin{pmatrix}
\frac 1s I_n & -A^T \\ -A & \frac 1s I_m
\end{pmatrix}$ in PDHG, $\mathcal{P}=\frac 1s I$ in PPM,   $\mathcal P=\begin{pmatrix}
            \frac{1}{\tau}I & -A^T \\ -A & \tau AA^T
        \end{pmatrix}$ in ADMM and $\mathcal P=\begin{pmatrix}
            \frac{1}{\tau}I & -A^T \\ -A & \lambda I
        \end{pmatrix}$ in l-ADMM. Consequently, the distance in the definition of IDS are chosen differently, and the trajectories of different algorithms may follow a different ellipsoid specified by the matrix $\mathcal{P}$ in Figure \ref{fig:bilinear}.

\subsection{Proximal Point Method (PPM)}
Consider a convex-concave minimax problem:
\begin{equation}\label{eq:ppm-minimax}
    \min_{x}\max_{y} \mathcal L(x,y) \ ,
\end{equation}
where $\mathcal L(x,y)$ is a potentially non-differentiable function that is convex in $x$ and concave in $y$. Proximal point method (PPM)~\cite{rockafellar1976monotone} (Algorithm \ref{alg:ppm}) is a classic algorithm for solving \eqref{eq:ppm-minimax}, and some of the other classic algorithms, such as extra-gradient method~\cite{nemirovski2004prox}, can be viewed as approximation of PPM. 
\begin{algorithm}
    \renewcommand{\algorithmicrequire}{\textbf{Input:}}
    \caption{Proximal Point Method for \eqref{eq:ppm-minimax}}
    \label{alg:ppm}
    \begin{algorithmic}[1]
        \REQUIRE Initial point $(x_0,y_0)$, step-size $s >0$.
        \FOR{$k=0,1,...$}
        \STATE $(x_{k+1},y_{k+1})=\arg\min_x\arg\max_y\left\{ \mathcal L(x,y) + \frac{1}{2s} \|x-x^k\|_2^2 -\frac{1}{2s} \|y-y^k\|_2^2 \right\}$
        \ENDFOR
    \end{algorithmic}
\end{algorithm}

Denote $\mathcal F(z)=\begin{pmatrix} \partial_x \mathcal{L}(x,y) \\ -\partial_y \mathcal L(x,y) \end{pmatrix}$. We can rewrite the update rule of PPM as follows:
\begin{equation*}
    \frac 1s(z_k-z_{k+1})\in \mathcal F(z_{k+1}) \ .
\end{equation*}

Parallel to the results of PDHG stated in Section \ref{sec:sec-decay-sublinear}-\ref{sec:linear}, we obtain:

\begin{thm}
Consider the iterates $\{z_k\}_{k=0}^{\infty}$ of PPM (Algorithm \ref{alg:ppm}) for solving a convex-concave minimax problem \eqref{eq:ppm-minimax}. Then it holds for $k\geq 1$ that:

(i) monotonically decay
\begin{equation*}
    dist^2\pran{0,\mathcal F(z_{k+1})}\leq dist^2\pran{0,\mathcal F(z_{k})} \ .
\end{equation*}
(ii) sublinear convergence
\begin{equation*}
        dist^2(0,\mathcal F(z_{k}))\leq  \frac{1}{k} \Vert z_{0}-z_* \Vert_2^2 \ .
\end{equation*}
(iii) Furthermore, assume the minimax problem \eqref{eq:ppm-minimax} satisfies metric sub-regularity condition \eqref{eq:eq-linear-condition} under $l_2$ norm on a set $\mathbb S$ that contains $\{z_k\}_{k=0}^{\infty}$, that is, for some constant $\alpha>0$,
\begin{equation*}
    \alpha dist(z,\mathcal Z^*)\leq dist(0,\mathcal F(z)) \ .
\end{equation*}
Then, it holds for any iteration $k\ge \left\lceil e/\alpha^2\right\rceil$ that
    \begin{equation*}
        dist^2(0,\mathcal F(z_k))\leq \exp\pran{1-\frac{k}{\left\lceil e/\alpha^2\right\rceil}}dist^2(0,\mathcal F(z_0)) \ .
    \end{equation*}
\end{thm}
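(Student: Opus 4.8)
The plan is to recognize that this theorem for PPM is an immediate corollary of the general machinery already developed for PDHG, obtained by the substitution $\mathcal P = \frac{1}{s} I$. As the authors emphasize in the second remark after Proposition \ref{thm:thm-decay} and again at the start of the Extensions section, the \emph{only} structural facts used in the proofs of Proposition \ref{thm:thm-decay}, Theorem \ref{thm:thm-sublinear}, and Theorem \ref{thm:thm-linear} are that the update can be written as $\mathcal P(z_k - z_{k+1}) \in \mathcal F(z_{k+1})$ for some positive definite $\mathcal P$, and that $\mathcal F$ is a monotone operator (which holds here because $\mathcal L$ is convex-concave). So first I would verify the update-rule reformulation: the PPM step $(x_{k+1},y_{k+1}) = \arg\min_x\arg\max_y \{\mathcal L(x,y) + \frac{1}{2s}\|x-x_k\|_2^2 - \frac{1}{2s}\|y-y_k\|_2^2\}$ has first-order optimality conditions $0 \in \partial_x \mathcal L(x_{k+1},y_{k+1}) + \frac{1}{s}(x_{k+1}-x_k)$ and $0 \in -\partial_y \mathcal L(x_{k+1},y_{k+1}) + \frac{1}{s}(y_{k+1}-y_k)$, which rearrange exactly to $\frac{1}{s}(z_k - z_{k+1}) \in \mathcal F(z_{k+1})$, i.e. \eqref{eq:cornerstone} with $\mathcal P = \frac{1}{s}I$. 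Since $\frac{1}{s}I \succ 0$ and $\|\cdot\|_{\mathcal P^{-1}} = \|\cdot\|_{(sI)}$, which differs from $\|\cdot\|_2$ only by the scalar factor $s$, the distance $dist_{\mathcal P^{-1}}(0,\cdot) = \sqrt{s}\, dist(0,\cdot)$ and $\|\cdot\|_{\mathcal P} = \frac{1}{\sqrt s}\|\cdot\|_2$; the scalar $s$ cancels in every inequality, which is why the statements appear with the plain $\ell_2$ norm.

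Next I would carry over each of the three conclusions. For (i), applying Proposition \ref{thm:thm-decay} verbatim with $P_s$ replaced by $\frac1s I$ gives $dist^2_{(sI)}(0,\mathcal F(z_{k+1})) \le dist^2_{(sI)}(0,\mathcal F(z_k))$, and dividing by $s$ gives the $\ell_2$ statement; note there is no step-size restriction $s < 1/\|A\|$ here since that restriction was only needed to guarantee $P_s \succ 0$ in the PDHG case, whereas $\frac1s I \succ 0$ for every $s > 0$. For (ii), apply Theorem \ref{thm:thm-sublinear} in the same way: using $z_* \in \mathcal Z^*$ so that $0 \in \mathcal F(z_*)$ together with monotonicity of $\mathcal F$, the telescoping argument yields $dist^2_{(sI)}(0,\mathcal F(z_k)) \le \frac1k \|z_0 - z_*\|^2_{\frac1s I}$, i.e. $s \cdot dist^2(0,\mathcal F(z_k)) \le \frac1k \cdot \frac1s \|z_0-z_*\|_2^2$; again the two factors of $\frac1s$ cancel against the $s$ on the left after multiplying through, giving $dist^2(0,\mathcal F(z_k)) \le \frac1k\|z_0-z_*\|_2^2$. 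For (iii), the hypothesis ``$\alpha\, dist(z,\mathcal Z^*) \le dist(0,\mathcal F(z))$ in $\ell_2$ norm'' is exactly metric sub-regularity \eqref{eq:eq-linear-condition} with $P_s = \frac1s I$ and $\alpha_s = \alpha$ (the scalars match on both sides), so Theorem \ref{thm:thm-linear} applies directly and gives the stated geometric decay with rate governed by $\lceil e/\alpha^2\rceil$.

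The main ``obstacle'' is really just making sure the norm bookkeeping is airtight — confirming that replacing $P_s$ by the scalar matrix $\frac1s I$ is legitimate at every point in the three earlier proofs and that the scalar $s$ genuinely cancels so that the $\ell_2$-norm statements are exactly equivalent to the $\|\cdot\|_{\mathcal P}$-norm statements. There is no deep new content; the proof is genuinely a restatement. I would therefore write it concisely: first display the first-order conditions of the PPM subproblem and the resulting $\frac1s(z_k - z_{k+1}) \in \mathcal F(z_{k+1})$; then observe that $\mathcal F$ is monotone because $\mathcal L$ is convex-concave; then invoke Proposition \ref{thm:thm-decay}, Theorem \ref{thm:thm-sublinear}, and Theorem \ref{thm:thm-linear} with the substitution $P_s \leftarrow \frac1s I$, noting in each case that the factors of $s$ cancel to produce the plain $\ell_2$ statements, and that the step-size condition $s < 1/\|A\|$ is vacuous here since $\frac1s I$ is automatically positive definite.
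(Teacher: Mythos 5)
Your proposal is correct and follows exactly the route the paper takes: the paper gives no separate proof for PPM, but instead (in the remark after Proposition \ref{thm:thm-decay} and in the Extensions section) observes that the PPM update is $\frac{1}{s}(z_k - z_{k+1}) \in \mathcal F(z_{k+1})$, i.e. the cornerstone inclusion \eqref{eq:cornerstone} with $\mathcal P = \frac{1}{s}I$, and invokes Proposition \ref{thm:thm-decay}, Theorem \ref{thm:thm-sublinear}, and Theorem \ref{thm:thm-linear} verbatim. Your added bookkeeping about the scalar $s$ cancelling so that the $\ell_2$-norm statements emerge, and the observation that the step-size restriction $s<1/\|A\|$ is vacuous here, are both correct and merely make explicit what the paper leaves implicit.
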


\subsection{Linearized ADMM}

Consider a minimization problem of form:
\begin{equation}\label{eq:dual}
    \min_x\; f(x)+g(Ax) \ ,
\end{equation}
where $f$ and $g$ are convex but potentially non-smooth functions. Examples of \eqref{eq:dual} include LASSO~\cite{chen1994basis,tibshirani1996regression}, SVM~\cite{boser1992training,cortes1995support}, regularized logistic regression~\cite{koh2007interior}, image recovery~\cite{chambolle2011first,bredies2015tgv}, etc. 
The primal-dual formulation of \eqref{eq:dual} is
\begin{equation}\label{eq:minmax-admm}
    \min_{x\in \mathbb R^n}\max_{y\in \mathbb R^m} \mathcal L(x,y)=f(x)+\left \langle Ax,y \right \rangle-g^*(y) \ ,
\end{equation}
where $g^*$ is the conjugate function of $g$. Linearized ADMM (l-ADMM)~\cite{ryu2022large} is a variant of ADMM that avoids solving linear equations. Algorithm \ref{alg:lin-admm} presents l-ADMM for solving \eqref{eq:dual}. 
 
\begin{algorithm}
    \renewcommand{\algorithmicrequire}{\textbf{Input:}}
    \caption{Linearized ADMM}
    \label{alg:lin-admm}
    \begin{algorithmic}[1]
        \REQUIRE Initial point $(x_0,y_0)$, {step-size} $\lambda$, $\tau$, and conjugate function $f^*$, $g^*$.
        \FOR{$k=0,1,...$}
        \STATE $w_{k+1}=\arg\min_w\left\{ f^*(w)- \langle w,v_k-\tau(A^Ty_{k}+w_{k}) \rangle +\frac{\tau}{2}\|w-w^k\|_2^2 \right\}$
        \STATE $v_{k+1}=v_k-\tau(A^Ty_k+w_{k+1})$
        \STATE $y_{k+1}=\arg\min_y \left\{ g^*(y)- \langle A^Ty,v_{k+1}-\tau(A^Ty_k+w_{k+1})\rangle +\frac{\lambda}{2}\|y-y_k\|_2^2  \right\}$
        \ENDFOR
    \end{algorithmic}
\end{algorithm}

As the first step, we here aim to find the appropriate differential inclusion form of the update rule, that is, find a positive (semi-)definite matrix $\mathcal P$ such that the update of the algorithm can be rewritten as
\begin{equation*}
    \mathcal P(z_k-z_{k+1}) \in \mathcal F(z_{k+1}) \ ,
\end{equation*}
where $\mathcal F(z)=\begin{pmatrix}
    \partial f(x)+ A^Ty \\ \partial g^*(y) -Ax
\end{pmatrix}$.

Notice that
\begin{align}\label{eq:lin-admm-v}
        \begin{split}
            v_{k+1} & \ = v_k-\tau A^Ty_k-\tau w_{k+1} \\
            & \ = (v_k-\tau A^Ty_k) -\tau \mathrm{prox}_{f^*}^{\tau }\pran{\frac{1}{\tau}(v_k-\tau A^Ty_k)} \\
            & \ = (v_k-\tau A^Ty_k) - (v_k-\tau A^Ty_k) + \mathrm{prox}_{f}^{\frac{1}{\tau} }(v_k-\tau A^Ty_k)\\
            & \ =  \mathrm{prox}_{f}^{\frac{1}{\tau} }(v_k-\tau A^Ty_k) \ ,
        \end{split}
\end{align}
where the second equality uses the update of $w_{k+1}=\mathrm{prox}_{f^*}^{\tau }\pran{\frac{1}{\tau}(v_k-\tau A^Ty_k)}$ and the third equality follows from the basic property of proximal operator $\mathrm{prox}_{f^*}^{\tau}(t)=t-\frac{1}{\tau}\mathrm{prox}_{f}^{\frac{1}{\tau}}(\tau t)$.

Moreover, it holds that
    \begin{align}\label{eq:lin-admm-y}
        \begin{split}
            y_{k+1} & \ = \arg\min_y \left\{ g^*(y)- \langle A^T(y-y_k),v_{k+1}-\tau(w_{k+1}+A^Ty_k)\rangle +\frac{\lambda}{2}\|y-y_k\|_2^2  \right\} \\ 
            & \ = \arg\min_y \left\{ g^*(y)-  \langle y-y_k,A(2v_{k+1}-v_k) \rangle +\frac{\lambda}{2}\|y-y_k\|_2^2  \right\}\\
            & \ = \mathrm{prox}_{g^*}^{\lambda }\pran{y_k+\frac{1}{\lambda} A(2v_{k+1}-v_k)} \ .
        \end{split}
    \end{align}
Denote $\mathcal P=\begin{pmatrix}
            \frac{1}{\tau}I & -A^T \\ -A & \lambda I
        \end{pmatrix}$, $z=(v,y)$. Rearranging \eqref{eq:lin-admm-v} and \eqref{eq:lin-admm-y}, we arrive at
\begin{align*}
    \begin{split}
    \mathcal P(z_k-z_{k+1}) & \ =\begin{pmatrix}
        \frac{1}{\tau} I_n & -A^T \\ -A & \lambda I_m
    \end{pmatrix}\begin{pmatrix}
        v_k-v_{k+1} \\ y_k-y_{k+1}
    \end{pmatrix}=\begin{pmatrix}
        \frac{1}{\tau}(v_k-v_{k+1})-A^T(y_k-y_{k+1}) \\  -A(v_k-v_{k+1})+\lambda(y_k-y_{k+1})
    \end{pmatrix}\\
    & \ \in \begin{pmatrix} \partial f(v_{k+1})+A^Ty_{k+1} \\ -Av_{k+1}+\partial g^*(y_{k+1}) \end{pmatrix}=\mathcal F(z_{k+1}) \ .
    \end{split}
\end{align*}

Then, utilizing exactly the same analysis on IDS that we have developed for PDHG in Section \ref{sec:sec-decay-sublinear}-\ref{sec:linear}, we reach the results for linearized ADMM:
\begin{thm}
Consider the iterates $\{(v_k,y_k)\}_{k=0}^{\infty}$ of Linearized ADMM (Algorithm \ref{alg:lin-admm}) for solving  problem \eqref{eq:dual}. Suppose $\lambda>\tau \|A\|^2$ and denote $\mathcal P=\begin{pmatrix}
            \frac{1}{\tau}I & -A^T \\ -A & \lambda I
        \end{pmatrix}$. Then it holds for $k\geq 1$ that:

    (i) monotonically decay
    \begin{equation*}
        \mathrm{dist}^2_{\mathcal P^{-1}}(0,\mathcal F(v_{k+1},y_{k+1}))\leq\mathrm{dist}^2_{\mathcal P^{-1}}(0,\mathcal F(v_k,y_k))
    \end{equation*}

    (ii) sublinear convergence
    \begin{equation*}
        dist_{\mathcal P^{-1}}^2(0,\mathcal F(v_{k},y_k))\leq  \frac{1}{k} \Vert (v_{0},y_0)-(v_*,y_*) \Vert_{\mathcal P}^2 \ .
    \end{equation*}

    (iii) Furthermore, assume the minimax problem \eqref{eq:minmax-admm} satisfies metric sub-regularity condition \eqref{eq:eq-linear-condition} for $\mathcal P$ on a set $\mathbb S$ that contains $\{(v_k,y_k)\}_{k=0}^{\infty}$, that is, for some constant $\alpha_{\mathcal P}>0$, 
    \begin{equation*}
        \alpha_{\mathcal P} dist_{\mathcal P}((v,y),\mathcal F^{-1}(0)) \leq dist_{\mathcal P^{-1}}(0,\mathcal F(v,y)) \ .
    \end{equation*} 
    Then, it holds for any iteration $k\ge \left\lceil e/\alpha_{\mathcal P}^2\right\rceil$ that
    \begin{equation*}
        dist_{\mathcal P^{-1}}^2(0,\mathcal F(v_{k},y_k))\leq \exp\pran{1-\frac{k}{\left\lceil e/\alpha_{\mathcal P}^2\right\rceil}}dist_{\mathcal P^{-1}}^2(0,\mathcal F(v_{0},y_0)) \ .
    \end{equation*}
\end{thm}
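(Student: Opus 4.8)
The plan is to reduce the theorem entirely to the abstract template already established for PDHG. By equations \eqref{eq:lin-admm-v} and \eqref{eq:lin-admm-y} and the subsequent rearrangement, the l-ADMM iterates satisfy the inclusion $\mathcal P(z_k - z_{k+1}) \in \mathcal F(z_{k+1})$ with $z=(v,y)$, $\mathcal P = \begin{pmatrix}\frac{1}{\tau}I & -A^T \\ -A & \lambda I\end{pmatrix}$, and $\mathcal F(z) = \begin{pmatrix}\partial f(v)+A^Ty \\ -Av + \partial g^*(y)\end{pmatrix}$. This is exactly the cornerstone form \eqref{eq:cornerstone}, so the first step is simply to verify that $\mathcal P$ is positive definite under the stated hypothesis $\lambda > \tau\|A\|^2$: writing the Schur complement with respect to the $(1,1)$ block $\frac{1}{\tau}I \succ 0$ gives $\lambda I - A(\tau I)A^T = \lambda I - \tau AA^T \succ 0$ precisely when $\lambda > \tau\|A\|^2$, so $\mathcal P \succ 0$ and $\mathcal P^{-1}$ is well-defined. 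I would also observe that $\mathcal F$ is a monotone operator: $f$ and $g^*$ are convex so $\partial f$ and $\partial g^*$ are monotone, and the skew-symmetric block $(v,y)\mapsto (A^Ty, -Av)$ is monotone, hence the sum is monotone. These two facts — $\mathcal P \succ 0$ and $\mathcal F$ monotone — are the only structural inputs used in the PDHG analysis.

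With the template in place, part (i) follows verbatim from the proof of Proposition \ref{thm:thm-decay}: set $\widetilde\omega_{k+1} = \mathcal P(z_k - z_{k+1}) \in \mathcal F(z_{k+1})$, take any $\omega_k \in \mathcal F(z_k)$, and run the identical chain of equalities (replacing $P_s$ by $\mathcal P$ throughout) to obtain $\|\widetilde\omega_{k+1}\|_{\mathcal P^{-1}}^2 - \|\omega_k\|_{\mathcal P^{-1}}^2 \le -\|\widetilde\omega_{k+1}-\omega_k\|_{\mathcal P^{-1}}^2 \le 0$, where the one inequality uses monotonicity of $\mathcal F$; then minimize over $\omega_k \in \mathcal F(z_k)$. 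Part (ii) likewise copies the proof of Theorem \ref{thm:thm-sublinear}: expand $\|z_i - z_*\|_{\mathcal P}^2 = \|z_i - z_{i+1}\|_{\mathcal P}^2 + \|z_{i+1} - z_*\|_{\mathcal P}^2 + 2\langle z_{i+1}-z_*, \mathcal P(z_i - z_{i+1})\rangle$, use $\mathcal P(z_i-z_{i+1}) = \widetilde\omega_{i+1} \in \mathcal F(z_{i+1})$, $0 \in \mathcal F(z_*)$ (which holds because the first-order optimality of \eqref{eq:minmax-admm} is exactly $0 \in \mathcal F(z)$), and monotonicity to drop the cross term, giving the telescoping bound $\mathrm{dist}_{\mathcal P^{-1}}^2(0,\mathcal F(z_{i+1})) \le \|z_i - z_*\|_{\mathcal P}^2 - \|z_{i+1}-z_*\|_{\mathcal P}^2$; then average using the monotonicity from (i). Part (iii) is the proof of Theorem \ref{thm:thm-linear} unchanged: combine the sublinear bound over a window of length $\lceil e/\alpha_{\mathcal P}^2\rceil$ with the metric sub-regularity inequality $\alpha_{\mathcal P}\,\mathrm{dist}_{\mathcal P}(z,\mathcal Z^*) \le \mathrm{dist}_{\mathcal P^{-1}}(0,\mathcal F(z))$ to get a contraction by factor $1/e$ per window, then iterate and invoke monotonicity to fill in intermediate iterations.

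The only genuinely new element beyond transcription is the derivation of the inclusion form itself, but that is already done in the excerpt (equations \eqref{eq:lin-admm-v}–\eqref{eq:lin-admm-y} and the display after). So the main obstacle is not a mathematical one but a bookkeeping one: making sure that $\mathcal P^{-1}$ appears in the right (dual) slot, that the positive-definiteness hypothesis $\lambda > \tau\|A\|^2$ is exactly what makes the Schur complement argument go through, and that the definition of $\mathcal F$ used for monotonicity matches the one used when writing down the update. I would state explicitly at the start of the proof that, since the l-ADMM update has been shown to be an instance of \eqref{eq:cornerstone} with $\mathcal P \succ 0$ and $\mathcal F$ monotone, all three claims follow by repeating the proofs of Proposition \ref{thm:thm-decay}, Theorem \ref{thm:thm-sublinear}, and Theorem \ref{thm:thm-linear} with $P_s$ replaced by $\mathcal P$, and then only spell out the Schur-complement check for positive definiteness in detail.
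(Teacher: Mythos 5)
Your proposal is correct and matches the paper's approach exactly: the paper derives the inclusion $\mathcal P(z_k-z_{k+1})\in\mathcal F(z_{k+1})$ from \eqref{eq:lin-admm-v}--\eqref{eq:lin-admm-y} and then invokes the PDHG analysis verbatim with $P_s$ replaced by $\mathcal P$. Your explicit Schur-complement verification that $\lambda>\tau\|A\|^2$ gives $\mathcal P\succ 0$, and the explicit check that $\mathcal F$ is monotone, are details the paper leaves implicit but are exactly the right things to confirm.
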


\subsection{Alternating Direction Method of Multipliers (ADMM)}

\begin{algorithm}
    \renewcommand{\algorithmicrequire}{\textbf{Input:}}
    \caption{ADMM}
    \label{alg:admm}
    \begin{algorithmic}[1]
        \REQUIRE Initial point $(x_0,y_0)$, {step-size} $\tau >0$.
        \FOR{$k=0,1,...$}
        \STATE $w_{k+1}=\arg\min_w\left\{ f^*(w)-\langle A^Ty_k+w,v_k \rangle +\frac{\tau}{2}\|w+A^Ty_k\|_2^2 \right\}$
        \STATE $v_{k+1}=v_k-\tau(A^Ty_k+w_{k+1})$
        \STATE $y_{k+1}=\arg\min_y \left\{ g^*(y)- \langle A^Ty+w_{k+1},v_{k+1}\rangle +\frac{\tau}{2}\|A^Ty+w_{k+1}\|_2^2  \right\}$
        \ENDFOR
    \end{algorithmic}
\end{algorithm}

In this subsection, we study the property of IDS of vanilla ADMM for solving \eqref{eq:dual}. Similar to the previous sections, we first rewrite the update rule of ADMM as an instance of \eqref{eq:cornerstone}.

Notice that
\begin{align}\label{eq:admm-v}
        \begin{split}
            v_{k+1} & \ = v_k-\tau A^Ty_k-\tau w_{k+1} \\
            & \ = (v_k-\tau A^Ty_k) -\tau \mathrm{prox}_{f^*}^{\tau}\pran{\frac{1}{\tau}(v_k-\tau A^Ty_k)} \\
            & \ = (v_k-\tau A^Ty_k) - (v_k-\tau A^Ty_k) + \mathrm{prox}_{f}^{\frac{1}{\tau}}(v_k-\tau A^Ty_k)\\
            & \ =  \mathrm{prox}_{f}^{\frac{1}{\tau}}(v_k-\tau A^Ty_k) \ ,
        \end{split}
    \end{align}
where the second equality uses the update of $w_{k+1}=\mathrm{prox}_{f^*}^{\tau }\pran{\frac{1}{\tau}(v_k-\tau A^Ty_k)}$ and the third equality follows from the basic property of proximal operator $\mathrm{prox}_{f^*}^{\tau}(t)=t-\frac{1}{\tau}\mathrm{prox}_{f}^{\frac{1}{\tau}}(\tau t)$.

Furthermore, it holds that
    \begin{align}\label{eq:admm-y}
        \begin{split}
            y_{k+1} & \ = \arg\min_y \left\{ g^*(y)- \langle A^Ty+w_{k+1},v_{k+1}\rangle +\frac{\tau}{2}\|A^Ty+w_{k+1}\|_2^2  \right\} \\ 
            & \ = \arg\min_y \left\{ g^*(y) +\frac{\tau}{2}\|A^Ty+w_{k+1}-\frac{1}{\tau}v_{k+1}\|_2^2  \right\}\\
            & \ = \arg\min_y \left\{ g^*(y) +\frac{\tau}{2}\|A^Ty-A^Ty_k-\frac{1}{\tau}(2v_{k+1}-v_k)\|_{2}^2  \right\}  \ .
        \end{split}
    \end{align}
Denote $\mathcal P=\begin{pmatrix}
            \frac{1}{\tau}I & -A^T \\ -A & \tau AA^T
        \end{pmatrix}$, $z=(v,y)$. Rearranging \eqref{eq:admm-v} and \eqref{eq:admm-y}, we arrive at
\begin{align*}
    \begin{split}
    \mathcal P(z_k-z_{k+1}) & \ =\begin{pmatrix}
        \frac{1}{\tau} I_n & -A^T \\ -A & \tau AA^T
    \end{pmatrix}\begin{pmatrix}
        v_k-v_{k+1} \\ y_k-y_{k+1}
    \end{pmatrix}=\begin{pmatrix}
        \frac{1}{\tau}(v_k-v_{k+1})-A^T(y_k-y_{k+1}) \\  -A(v_k-v_{k+1})+\tau AA^T(y_k-y_{k+1})
    \end{pmatrix}\\
    & \ \in \begin{pmatrix} \partial f(v_{k+1})+A^Ty_{k+1} \\ -Av_{k+1}+\partial g^*(y_{k+1}) \end{pmatrix}=\mathcal F(z_{k+1}) \ .
    \end{split}
\end{align*}

A key difference between ADMM and the other methods we present is that $\mathcal P$ is not invertible, thus $\mathcal P ^{-1}$ in IDS is not well-defined. Fortunately, this can be overcome by redefining
IDS along the non-singular subspace of $\mathcal{P}$ as following:
\begin{mydef}\label{def:def-distance-pseudo}
The infimal sub-differential size (IDS) at solution $z$ for algorithm with update rule $\mathcal P(z_k-z_{k+1})\in\mathcal F(z_{k+1})$ for solving convex-concave minimax problem \eqref{eq:minmax} is defined as:
    \begin{equation}\label{eq:extended-IDS}
        dist_{\mathcal P^{-}}^2(0,\mathcal F(z)\cap \mathrm{range}(\mathcal P)) \ ,
    \end{equation}
where $\mathcal P^{-}$ is the pseudo-inverse of $\mathcal P$ and $\mathrm{range}(\mathcal P)$ is the range of matrix $\mathcal P$. $\mathcal F(z)=\begin{pmatrix} \partial f(x)+A^Ty \\ -Ax+\partial g(y) \end{pmatrix}$ is the sub-differential of the objective $\mathcal L(x,y)$.
\end{mydef}

Notice that \eqref{eq:extended-IDS} is still a valid progress metric of the solution, by noticing $dist^2_{\mathcal P^{-1}}(0,\mathcal F(z_k))=0$ implies that $0\in \mathcal F(z_{k+1})$. More formally, if IDS equals to zero, it holds that $\|z_k-z_{k+1}\|_{\mathcal P}^2=0$ and thus $0=\mathcal P(z_k-z_{k+1})\in \mathcal F(z_{k+1})$.

All the results we developed before holds for the refined IDS for ADMM with a similar proof, and we present the difference of the proof in Appendix \ref{app:non-singular}):
\begin{thm}
Consider the iterates $\{(v_k,y_k)\}_{k=0}^{\infty}$ of ADMM (Algorithm \ref{alg:admm}) for solving problem \eqref{eq:dual}. Denote $\mathcal P=\begin{pmatrix}
            \frac{1}{\tau}I & -A^T \\ -A & \tau AA^T
        \end{pmatrix}$. Then it holds for $k\geq 1$ that:

    (i) monotonically decay
    \begin{equation*}
        \mathrm{dist}^2_{\mathcal P^{-}}(0,\mathcal F(v_{k+1},y_{k+1})\cap \mathrm{range}(\mathcal P))\leq \mathrm{dist}^2_{\mathcal P^{-}}(0,\mathcal F(v_k,y_k)\cap \mathrm{range}(\mathcal P))
    \end{equation*}

    (ii) sublinear convergence
    \begin{equation*}
        dist_{\mathcal P^{-}}^2(0,\mathcal F(v_{k},y_k)\cap \mathrm{range}(\mathcal P))\leq  \frac{1}{k} \Vert (v_{0},y_0)-(v_*,y_*) \Vert_{\mathcal P}^2 \ .
    \end{equation*}

    (iii) Furthermore, assume the minimax problem \eqref{eq:minmax-admm} satisfies metric sub-regularity condition \eqref{eq:eq-linear-condition} for $\mathcal P$ on a set $\mathbb S$ that contains $\{z_k\}_{k=0}^{\infty}$, that is, for some constant $\alpha_{\mathcal P}>0$, 
    \begin{equation*}
        \alpha_{\mathcal P} dist_{\mathcal P}((v,y),\mathcal F^{-1}(0)) \leq dist_{\mathcal P^{-}}(0,\mathcal F(v,y)) \ .
    \end{equation*} 
    Then, it holds for any iteration $k\ge \left\lceil e/\alpha_{\mathcal P}^2\right\rceil$ that
    \begin{equation*}
        dist_{\mathcal P^{-}}^2(0,\mathcal F(v_{k},y_k)\cap \mathrm{range}(\mathcal P))\leq \exp\pran{1-\frac{k}{\left\lceil e/\alpha_{\mathcal P}^2\right\rceil}}dist_{\mathcal P^{-}}^2(0,\mathcal F(v_{0},y_0)\cap \mathrm{range}(\mathcal P)) \ .
    \end{equation*}
\end{thm}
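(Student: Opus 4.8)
The plan is to re-run the three PDHG arguments --- Proposition~\ref{thm:thm-decay}, Theorem~\ref{thm:thm-sublinear}, and Theorem~\ref{thm:thm-linear} --- almost verbatim. As noted in the Remark after Proposition~\ref{thm:thm-decay}, each of those proofs uses only (a) the differential-inclusion form of the update, (b) positive (semi)definiteness of the conditioning matrix, and (c) monotonicity of $\mathcal{F}$. Ingredient (a) has just been established for ADMM: combining \eqref{eq:admm-v} and \eqref{eq:admm-y} gives $\mathcal{P}(z_k-z_{k+1}) \in \mathcal{F}(z_{k+1})$ with $\mathcal{P} = \begin{pmatrix} \frac{1}{\tau}I & -A^T \\ -A & \tau AA^T \end{pmatrix}$, and (b) holds because the Schur complement of the $\frac1\tau I$ block of $\mathcal{P}$ is $\tau AA^T - \tau AA^T = 0 \succeq 0$, so $\mathcal{P}\succeq 0$ with an $m$-dimensional kernel. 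Thus the only genuinely new work is bookkeeping for the rank deficiency of $\mathcal{P}$, i.e.\ checking that replacing $\mathcal{P}^{-1}$ by the pseudo-inverse $\mathcal{P}^{-}$ and intersecting the sub-differential with $\mathrm{range}(\mathcal{P})$, as in Definition~\ref{def:def-distance-pseudo}, leaves every step intact.

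First I would record the linear-algebra facts that make this harmless. Since $\mathcal{P}$ is symmetric and PSD, $\mathrm{range}(\mathcal{P}) = \ker(\mathcal{P})^{\perp}$, the product $\mathcal{P}\mathcal{P}^{-} = \mathcal{P}^{-}\mathcal{P}$ equals the orthogonal projection $\Pi$ onto $\mathrm{range}(\mathcal{P})$, and for every $w \in \mathrm{range}(\mathcal{P})$ and every $u$ one has $\langle w, \Pi u\rangle = \langle w, u\rangle$. Two consequences are used repeatedly: the candidate residual $\widetilde\omega_{k+1} := \mathcal{P}(z_k-z_{k+1})$ automatically lies in $\mathrm{range}(\mathcal{P})$, hence in $\mathcal{F}(z_{k+1}) \cap \mathrm{range}(\mathcal{P})$, so this set is nonempty whenever $\widetilde\omega_{k+1}$ is defined and $dist_{\mathcal{P}^{-}}^2(0,\mathcal{F}(z_{k+1}) \cap \mathrm{range}(\mathcal{P})) \leq \Vert \widetilde\omega_{k+1}\Vert_{\mathcal{P}^{-}}^2$; and on $\mathrm{range}(\mathcal{P})$ the pseudo-inverse behaves like a true inverse, so that $\Vert \mathcal{P}u\Vert_{\mathcal{P}^{-}}^2 = \langle \mathcal{P}u, u\rangle = \Vert u\Vert_{\mathcal{P}}^2$ and, for $w, w' \in \mathrm{range}(\mathcal{P})$, $\langle w-w', \mathcal{P}^{-}\mathcal{P}u\rangle = \langle w-w', u\rangle$ and $\langle \mathcal{P}u, \mathcal{P}^{-}w\rangle = \langle u, w\rangle$.

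With these in hand, the monotonicity proof of Proposition~\ref{thm:thm-decay} reproduces itself once every competitor $\omega_k$ is restricted to $\mathcal{F}(z_k) \cap \mathrm{range}(\mathcal{P})$: expand $\Vert \widetilde\omega_{k+1}\Vert_{\mathcal{P}^{-}}^2 - \Vert \omega_k\Vert_{\mathcal{P}^{-}}^2 = \langle \widetilde\omega_{k+1}-\omega_k, \mathcal{P}^{-}(\widetilde\omega_{k+1}+\omega_k)\rangle$, add the nonnegative term $2\langle\widetilde\omega_{k+1}-\omega_k, z_{k+1}-z_k\rangle$ coming from monotonicity of $\mathcal{F}$, substitute $\widetilde\omega_{k+1} = \mathcal{P}(z_k-z_{k+1})$, and use the facts above --- all inner products involved are against $\widetilde\omega_{k+1}-\omega_k \in \mathrm{range}(\mathcal{P})$, so $\mathcal{P}^{-}\widetilde\omega_{k+1}$ may be replaced by $z_k-z_{k+1}$ --- collecting the result into $-\Vert \widetilde\omega_{k+1}-\omega_k\Vert_{\mathcal{P}^{-}}^2 \leq 0$ and finishing by taking $\omega_k$ to be the minimizer over $\mathcal{F}(z_k)\cap\mathrm{range}(\mathcal{P})$; this gives part (i). For the sublinear bound, expand $\Vert z_i-z_*\Vert_{\mathcal{P}}^2 = \Vert z_i-z_{i+1}\Vert_{\mathcal{P}}^2 + \Vert z_{i+1}-z_*\Vert_{\mathcal{P}}^2 + 2\langle z_{i+1}-z_*, \widetilde\omega_{i+1}\rangle$ as in Theorem~\ref{thm:thm-sublinear}, identify $\Vert z_i-z_{i+1}\Vert_{\mathcal{P}}^2 = \Vert \widetilde\omega_{i+1}\Vert_{\mathcal{P}^{-}}^2$, drop the cross term by monotonicity of $\mathcal{F}$ against $0\in\mathcal{F}(z_*)$, telescope, and combine with part (i) to get part (ii); for part (iii), feed the part (ii) estimate into the assumed metric sub-regularity inequality and recurse exactly as in Theorem~\ref{thm:thm-linear}.

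The main obstacle --- and the only place where care is genuinely needed --- is that the bilinear cancellations in the PDHG proof rely on $\mathcal{P}\mathcal{P}^{-1} = I$, whereas here $\mathcal{P}\mathcal{P}^{-} = \Pi \neq I$. This is precisely why IDS is defined with $\mathcal{F}(z) \cap \mathrm{range}(\mathcal{P})$: every residual appearing on a right-hand side ($\widetilde\omega_{k+1}$ and the minimizing $\omega_k$) is forced into $\mathrm{range}(\mathcal{P})$, on which $\Pi$ acts as the identity inside the inner products that occur, so each PDHG cancellation goes through unchanged. One should also note that $\Vert\cdot\Vert_{\mathcal{P}}$ is only a seminorm, but this is immaterial: the arguments use only the displayed inequalities, never positivity of $\Vert z-z_*\Vert_{\mathcal{P}}$, and the conclusions are bounds on IDS rather than convergence of the iterates themselves.
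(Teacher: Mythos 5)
Your proposal is correct and follows essentially the same route as the paper: the appendix proof likewise reduces everything to the semidefinite update inclusion $\mathcal P(z_k-z_{k+1})\in\mathcal F(z_{k+1})\cap\mathrm{range}(\mathcal P)$ and reruns the three PDHG arguments with $\mathcal P^{-}$ in place of $\mathcal P^{-1}$, using exactly the identities $\omega_k=\mathcal P^{-}\mathcal P\,\omega_k$ for $\omega_k\in\mathrm{range}(\mathcal P)$ and $\Vert\mathcal P u\Vert_{\mathcal P^{-}}^2=\Vert u\Vert_{\mathcal P}^2$ that you isolate. Your explicit Schur-complement check that $\mathcal P\succeq 0$ and the remark that $\Vert\cdot\Vert_{\mathcal P}$ being only a seminorm is harmless are small additions the paper leaves implicit.
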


}

\section{Conclusions and future direction}
In this paper, we introduce a new progress metric for analyzing PDHG for convex-concave minimax problems, which we call infimal sub-differential size (IDS). IDS is a natural generalization of the gradient norm to non-smooth problems. Compared to other progress metric, IDS is finite, easy to compute, and monotonically decays along the iterates of PDHG. We further show the sublinear convergence rate and linear convergence rate of IDS. The monotonicity and simplicity of the analysis suggest that IDS is the right progress metric for analyzing PDHG. All of the results we obtain here can be directly extended to PPM, ADMM and l-ADMM. A direct future direction is whether the properties of IDS hold for other primal-dual algorithms. Another future direction is whether IDS suggests good step-size and/or preconditioner for primal-dual algorithms.

\subsection*{Acknowledgement} The authors would like to thank David Applegate, Oliver Hinder, Miles Lubin and Warren Schudy for the helpful discussions that motivated the authors to identify a monotonically decaying progress metric for PDLP.


\bibliographystyle{amsplain}
\bibliography{ref-papers}

\newpage
\appendix

\blue{
\section{Analysis for ADMM}\label{app:non-singular}


In this section, we consider algorithms with update rule as follows: for some positive semi-definite matrix $\mathcal P$,  
\begin{equation}\label{alg:extend-ids}
    \mathcal P(z_k-z_{k+1})\in\mathcal F(z_{k+1}) \ .
\end{equation}
The previous results for IDS require matrix $\mathcal P$ to be positive definite. Here we extend the notion of IDS to handle positive semi-definite but not necessarily full rank $\mathcal P$. We start with the following definition of IDS. Note that for positive definite $\mathcal P$, it reduces to Definition \ref{def:def-distance}.

\begin{mydef}\label{def:IDS-ADMM}
The infimal sub-differential size (IDS) at solution $z$ for algorithm with update rule \eqref{alg:extend-ids} for solving convex-concave minimax problem \eqref{eq:minmax} is defined as:
    \begin{equation*}
        dist_{\mathcal P^{-}}^2(0,\mathcal F(z)\cap \mathrm{range}(\mathcal P)) \ ,
    \end{equation*}
where $\mathcal P^{-}$ is the pseudo-inverse of $\mathcal P$ and $\mathrm{range}(\mathcal P)$ is the range of matrix $\mathcal P$. $\mathcal F(z)=\begin{pmatrix} \partial f(x)+A^Ty \\ -Ax+\partial g(y) \end{pmatrix}$ is the sub-differential of the objective $\mathcal L(x,y)$.
\end{mydef}

There are two major differences to deal this case with singular $\mathcal P$. First, pseudo-inverse of $\mathcal P$ is used since the inverse $\mathcal P^{-1}$ is not well-defined. Furthermore, in contrast to $\mathcal F(z)$ in Definition \ref{def:def-distance}, here we consider $\mathcal F(z)\cap \mathrm{range}(\mathcal P)$ in that $\mathcal P$ is positive definite along the subspace $\mathrm{range}(\mathcal P)$ but not necessarily the whole space. 

IDS under Definition \ref{def:IDS-ADMM} is still a valid metric for optimality, namely, if $dist^2_{\mathcal P^-}(0,\mathcal F(z_k))$ is small, then $z_{k+1}$ is a good approximation to the solution. Moreover, it also keeps the desirable properties: monotonic decay, sublinear convergence and linear rate under metric sub-regularity. The results are summarized in the following propositions.

\begin{prop}
Consider algorithm with update rule \eqref{alg:extend-ids} for solving a convex-concave minimax problem \eqref{eq:minmax}. It holds for $k\geq 0$ that
\begin{equation*}
    dist_{\mathcal P^{-}}^2\pran{0,\mathcal F(z_{k+1})\cap \mathrm{range}(\mathcal P)}\leq dist_{\mathcal P^{-}}^2\pran{0,\mathcal F(z_{k})\cap \mathrm{range}(\mathcal P)} \ .
\end{equation*}
\end{prop}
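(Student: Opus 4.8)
The plan is to mimic the proof of Proposition \ref{thm:thm-decay} essentially verbatim, but replacing the inverse $P_s^{-1}$ by the pseudo-inverse $\mathcal P^{-}$ and restricting attention to the subspace $\mathrm{range}(\mathcal P)$. First I would observe that, by the update rule \eqref{alg:extend-ids}, the vector $\widetilde\omega_{k+1}:=\mathcal P(z_k-z_{k+1})$ lies in both $\mathcal F(z_{k+1})$ and, trivially, in $\mathrm{range}(\mathcal P)$, so it is a feasible candidate for the infimum defining $dist_{\mathcal P^{-}}^2(0,\mathcal F(z_{k+1})\cap \mathrm{range}(\mathcal P))$; hence it suffices to bound $\Vert \widetilde\omega_{k+1}\Vert_{\mathcal P^{-}}^2$ from above by $dist_{\mathcal P^{-}}^2(0,\mathcal F(z_k)\cap\mathrm{range}(\mathcal P))$.

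Next, I would let $\omega_k$ be any element of $\mathcal F(z_k)\cap\mathrm{range}(\mathcal P)$ and run the same chain of (in)equalities as in the proof of Proposition \ref{thm:thm-decay}. The two facts that make the computation go through are: (i) since both $\widetilde\omega_{k+1}$ and $\omega_k$ lie in $\mathrm{range}(\mathcal P)$, we have the identities $\mathcal P\mathcal P^{-}\widetilde\omega_{k+1}=\widetilde\omega_{k+1}$ and $\mathcal P\mathcal P^{-}\omega_k=\omega_k$, and $\mathcal P^{-}\mathcal P(z_k-z_{k+1})$ equals $z_k-z_{k+1}$ up to a component in $\ker\mathcal P$, which is annihilated when paired against vectors in $\mathrm{range}(\mathcal P)$ — so all the algebraic manipulations that previously used $P_s^{-1}P_s=I$ remain valid when interpreted as inner products against range vectors; and (ii) monotonicity of $\mathcal F$, i.e. $\langle \widetilde\omega_{k+1}-\omega_k, z_{k+1}-z_k\rangle \ge 0$. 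Carrying this out yields
\begin{equation*}
    \Vert \widetilde\omega_{k+1}\Vert_{\mathcal P^{-}}^2 - \Vert \omega_k\Vert_{\mathcal P^{-}}^2 \le -\Vert \widetilde\omega_{k+1}-\omega_k\Vert_{\mathcal P^{-}}^2 \le 0 \ .
\end{equation*}
Finally I would take $\omega_k$ to be the minimizer of $\Vert\cdot\Vert_{\mathcal P^{-}}^2$ over $\mathcal F(z_k)\cap\mathrm{range}(\mathcal P)$, giving $dist_{\mathcal P^{-}}^2(0,\mathcal F(z_{k+1})\cap\mathrm{range}(\mathcal P)) \le \Vert\widetilde\omega_{k+1}\Vert_{\mathcal P^{-}}^2 \le dist_{\mathcal P^{-}}^2(0,\mathcal F(z_k)\cap\mathrm{range}(\mathcal P))$.

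The main obstacle, and the only place requiring care beyond the positive-definite case, is justifying that the cancellations in the middle of the computation still work when $\mathcal P$ is singular. Concretely, the step that rewrites $P_s^{-1}(P_s(z_k-z_{k+1}))$ as $z_k-z_{k+1}$ must be replaced by $\mathcal P^{-}\widetilde\omega_{k+1} = z_k - z_{k+1} - \Pi_{\ker\mathcal P}(z_k-z_{k+1})$, and one must check that the stray kernel component never contributes, because it always appears inside an inner product $\langle \cdot, v\rangle$ with $v\in\mathrm{range}(\mathcal P)=(\ker\mathcal P)^\perp$ (using symmetry of $\mathcal P$, hence of $\mathcal P^{-}$). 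I would state this as a short lemma — for symmetric PSD $\mathcal P$ and $u,v\in\mathrm{range}(\mathcal P)$, $\langle \mathcal P^{-}u, v\rangle = \langle u, \mathcal P^{-}v\rangle$ and $\langle \mathcal P^{-}u, \mathcal P w\rangle = \langle u, \Pi_{\mathrm{range}(\mathcal P)} w\rangle$ — and then the rest of the argument is a line-by-line transcription of the proof of Proposition \ref{thm:thm-decay}.
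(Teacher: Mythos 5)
Your proposal is correct and follows essentially the same route as the paper's proof: identify $\widetilde\omega_{k+1}=\mathcal P(z_k-z_{k+1})$ as a feasible element of $\mathcal F(z_{k+1})\cap\mathrm{range}(\mathcal P)$, rerun the chain of identities from Proposition \ref{thm:thm-decay} with $\mathcal P^{-}$ in place of $P_s^{-1}$, and use the facts $\mathcal P\mathcal P^{-}\omega=\omega$ for $\omega\in\mathrm{range}(\mathcal P)$ and $\mathcal P^{-}\mathcal P=\Pi_{\mathrm{range}(\mathcal P)}$ to kill the stray kernel component (the paper does exactly this in its fifth equality, inserting $\omega_k=\mathcal P\mathcal P^{-}\omega_k$). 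Your explicit lemma about pairing against range vectors is just a slightly more formal packaging of the same observation.
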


\begin{proof}
The proof is similar to Proposition \ref{thm:thm-decay}. First, notice that the iterate update is given by
\begin{equation*}
    \mathcal P(z_k-z_{k+1})\in \mathcal F(z_{k+1})\ ,
\end{equation*}
and it is also obvious that
\begin{equation*}
  \mathcal P(z_k-z_{k+1})\in \mathrm{range}(\mathcal P) \ .
\end{equation*}
Hence we have
\begin{equation*}
    \mathcal P(z_k-z_{k+1})\in \mathcal F(z_{k+1}) \cap \mathrm{range}(\mathcal P) \ .
\end{equation*}

Now denote $\widetilde\omega_{k+1}=\mathcal P(z_k-z_{k+1})\in \mathcal F(z_{k+1})\cap \mathrm{range}(\mathcal P)$, then it holds for any $\omega_k\in\mathcal F(z_k)\cap \mathrm{range}(\mathcal P)$ that
\begin{align*}
    \begin{split}
        \Vert \widetilde\omega_{k+1} \Vert_{\mathcal P^{-}}^2-\Vert \omega_k \Vert_{\mathcal P^{-}}^2 & = \left \langle \widetilde\omega_{k+1}-\omega_k, \mathcal P^{-}(\widetilde\omega_{k+1}+\omega_k)\right \rangle \\
        &\leq \left \langle \widetilde\omega_{k+1}-\omega_k, 2(z_{k+1}-z_k)+\mathcal P^{-}(\widetilde\omega_{k+1}+\omega_k)\right \rangle\\
        & = \left \langle \mathcal P(z_k-z_{k+1})-\omega_k, 2(z_{k+1}-z_k)+\mathcal P^{-}(\mathcal P(z_k-z_{k+1})+\omega_k)\right \rangle\\
        & = \left \langle \mathcal P(z_k-z_{k+1})-\omega_k, (-2I+\mathcal P^{-}\mathcal P)(z_k-z_{k+1})+\mathcal P^{-}\omega_k\right \rangle\\
        & = -\Vert z_k-z_{k+1} \Vert_{\mathcal P}^2+2\omega_k^T(z_k-z_{k+1})-\Vert \omega_k \Vert_{\mathcal P^{-}}^2\\
        & = -\Vert z_k-z_{k+1} \Vert_{\mathcal P}^2+2\omega_k^T\mathcal P^-\mathcal P(z_k-z_{k+1})-\Vert \omega_k \Vert_{\mathcal P^{-}}^2\\
        & = -\Vert \mathcal P(z_k-z_{k+1})-\omega_k \Vert_{\mathcal P^{-}}^2\\
        & = -\Vert \widetilde\omega_{k+1}-\omega_k \Vert_{\mathcal P^{-}}^2 \\
        & \leq 0 \ ,
    \end{split}
\end{align*}
where the inequality utilizes $\mF$ is a monotone operator due to the convexity-concavity of the objective $\mathcal{L}(x,y)$, thus $\left \langle \widetilde\omega_{k+1}-\omega_k, z_{k+1}-z_k\right\rangle\ge 0$. The second equality uses $\widetilde\omega_{k+1}=\mathcal P(z_k-z_{k+1})$. and the fifth equality follows from $\omega_{k}\in \mathcal F(z_k)\cap \mathrm{range}(\mathcal P)$ that $\omega_k=\mathcal P^-\mathcal P\omega_k$.

Now choose $\omega_k=\arg\min_{w\in \mathcal F(z_k)\cap \mathrm{range}(\mathcal P)} \{ \|w\|_{\mathcal P^{-1}}^2\}$, and we obtain
\begin{equation*}
    dist_{\mathcal P^{-1}}^2(0,\mathcal F(z_{k+1})\cap \mathrm{range}(\mathcal P))\leq \Vert \widetilde\omega_{k+1} \Vert_{\mathcal P^{-1}}^2 \leq \Vert \omega_k \Vert_{\mathcal P^{-1}}^2 = dist_{\mathcal P^{-1}}^2(0,\mathcal F(z_k)\cap \mathrm{range}(\mathcal P)) \ ,
\end{equation*}
which finishes the proof.
\end{proof}

Provided the monotonic decay, sublinear convergence holds and thus under metric sub-regularity under $\mathcal P$-norm, IDS converges linearly. The proofs are identical to Theorem \ref{thm:thm-sublinear} and Theorem \ref{thm:thm-linear} respectively.
\begin{prop}
Consider the iterates $\{z_k\}_{k=0}^{\infty}$ of algorithm with update rule \eqref{alg:extend-ids} to solve a convex-concave minimax problem \eqref{eq:minmax}. Let $z_*\in \mathcal Z^*$ be an optimal point to \eqref{eq:minmax}. Then, it holds for any iteration {$k\geq 1$} that
\begin{equation*}
        dist_{\mathcal P^{-}}^2(0,\mathcal F(z_{k})\cap \mathrm{range}(\mathcal P))\leq  \frac{1}{k} \Vert z_{0}-z_* \Vert_{\mathcal P}^2 \ .
\end{equation*}    
\end{prop}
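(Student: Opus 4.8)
The plan is to transcribe the proof of Theorem \ref{thm:thm-sublinear} with $P_s$ replaced by the positive semidefinite matrix $\mathcal P$ and its inverse replaced by the pseudo-inverse $\mathcal P^-$, paying attention to the two points where the singularity of $\mathcal P$ intervenes. First I would record that the update rule \eqref{alg:extend-ids} can be written as $\widetilde\omega_{i+1} := \mathcal P(z_i - z_{i+1}) \in \mathcal F(z_{i+1}) \cap \mathrm{range}(\mathcal P)$: membership in $\mathrm{range}(\mathcal P)$ is automatic, and membership in $\mathcal F(z_{i+1})$ is exactly \eqref{alg:extend-ids}. Hence $\widetilde\omega_{i+1}$ is a feasible point in the infimum defining IDS, so $dist_{\mathcal P^-}^2(0,\mathcal F(z_{i+1})\cap\mathrm{range}(\mathcal P)) \le \|\widetilde\omega_{i+1}\|_{\mathcal P^-}^2$.

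Next, using that $\mathcal P$ is symmetric (so the usual quadratic expansion holds even though $\|\cdot\|_{\mathcal P}$ is only a seminorm), I would write
\[
\|z_i-z_*\|_{\mathcal P}^2 = \|z_i-z_{i+1}\|_{\mathcal P}^2 + \|z_{i+1}-z_*\|_{\mathcal P}^2 + 2\langle z_{i+1}-z_*, \mathcal P(z_i-z_{i+1})\rangle .
\]
The key algebraic identity is $\mathcal P\mathcal P^-\mathcal P = \mathcal P$, which yields $\|\widetilde\omega_{i+1}\|_{\mathcal P^-}^2 = (z_i-z_{i+1})^\top \mathcal P\mathcal P^-\mathcal P (z_i-z_{i+1}) = \|z_i-z_{i+1}\|_{\mathcal P}^2$; this is precisely the place where intersecting with $\mathrm{range}(\mathcal P)$ in Definition \ref{def:IDS-ADMM} pays off, since $\mathcal P^-$ behaves as a genuine inverse only on $\mathrm{range}(\mathcal P)$. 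Combining with monotonicity of $\mathcal F$ and $0\in\mathcal F(z_*)$ --- which forces $\langle z_{i+1}-z_*, \widetilde\omega_{i+1}\rangle \ge 0$ --- gives the one-step estimate $dist_{\mathcal P^-}^2(0,\mathcal F(z_{i+1})\cap\mathrm{range}(\mathcal P)) \le \|\widetilde\omega_{i+1}\|_{\mathcal P^-}^2 \le \|z_i-z_*\|_{\mathcal P}^2 - \|z_{i+1}-z_*\|_{\mathcal P}^2$, the exact analogue of \eqref{eq:eq-sublinear}.

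Finally I would invoke the monotone-decay property established just above for this singular setting to obtain $dist_{\mathcal P^-}^2(0,\mathcal F(z_k)\cap\mathrm{range}(\mathcal P)) \le \frac1k\sum_{i=1}^k dist_{\mathcal P^-}^2(0,\mathcal F(z_i)\cap\mathrm{range}(\mathcal P))$, bound each summand by the telescoping difference, and collapse the sum to $\frac1k\left(\|z_0-z_*\|_{\mathcal P}^2 - \|z_k-z_*\|_{\mathcal P}^2\right) \le \frac1k\|z_0-z_*\|_{\mathcal P}^2$. I do not anticipate a genuine obstacle: the argument is the full-rank proof verbatim, and the only care needed is bookkeeping around the pseudo-inverse --- keeping every quantity whose $\mathcal P^-$-norm is taken inside $\mathrm{range}(\mathcal P)$ so that the pseudo-inverse identities apply --- which is built into Definition \ref{def:IDS-ADMM} by design.
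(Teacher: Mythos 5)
Your proposal is correct and follows exactly the route the paper intends: the paper states that the proof is identical to that of Theorem \ref{thm:thm-sublinear}, and you carry out that transcription, correctly isolating the two places where the pseudo-inverse matters (that $\mathcal P(z_i-z_{i+1})$ lies in $\mathrm{range}(\mathcal P)$ so it is feasible for the infimum, and that $\mathcal P\mathcal P^-\mathcal P=\mathcal P$ gives $\Vert\mathcal P(z_i-z_{i+1})\Vert_{\mathcal P^-}^2=\Vert z_i-z_{i+1}\Vert_{\mathcal P}^2$). If anything, your write-up is more explicit than the paper's, which omits these bookkeeping details entirely.
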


\begin{prop}
    Consider the iterations $\{z_k\}_{k=0}^{\infty}$ of algorithm with update rule \eqref{alg:extend-ids} to solve a convex-concave minimax problem \eqref{eq:minmax}. Suppose the minimax problem \eqref{eq:minmax} satisfies metric sub-regularity condition \eqref{eq:eq-linear-condition} on a set $\mathbb S$ that contains $\{z_k\}_{k=0}^{\infty}$, that is,
    \begin{equation*}
        \alpha_{\mathcal P}dist_{\mathcal P}(z,\mathcal Z^*) \leq dist_{\mathcal P^{-}}(0,\mathcal F(z)) \ .
    \end{equation*}
    Then, it holds for any iteration $k\ge \left\lceil e/\alpha_{\mathcal P}^2\right\rceil$ that
    \begin{equation*}
        dist_{\mathcal P^{-}}^2(0,\mathcal F(z_k)\cap \mathrm{range}(\mathcal P))\leq \exp\pran{1-\frac{k}{\left\lceil e/\alpha_{\mathcal P}^2\right\rceil}}dist_{\mathcal P^{-}}^2(0,\mathcal F(z_0)\cap \mathrm{range}(\mathcal P)) \ .
    \end{equation*}
\end{prop}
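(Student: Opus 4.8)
The plan is to observe that the statement to be proved is \emph{exactly} Proposition~\ref{thm:thm-linear} (Theorem~\ref{thm:thm-linear}) transcribed into the language of the generalized update rule \eqref{alg:extend-ids}, with $P_s$ replaced by the positive semidefinite matrix $\mathcal P$, the norm $\Vert\cdot\Vert_{P_s^{-1}}$ replaced by $\Vert\cdot\Vert_{\mathcal P^-}$, and $\mathcal F(z)$ replaced by $\mathcal F(z)\cap\mathrm{range}(\mathcal P)$. So I would not reprove anything from scratch; instead I would run the identical recursion, citing the two results that are stated just above it in this appendix: the sublinear-convergence proposition (the $\mathcal P^-$ analogue of Theorem~\ref{thm:thm-sublinear}) and the monotonic-decay proposition (the $\mathcal P^-$ analogue of Proposition~\ref{thm:thm-decay}).

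Concretely, I would carry out the following steps. First, fix $k\geq 1$ and write $k$ in the form $c\lceil e/\alpha_{\mathcal P}^2\rceil \leq k < (c+1)\lceil e/\alpha_{\mathcal P}^2\rceil$ for a nonnegative integer $c$. Second, apply the sublinear proposition started from iterate $z_{k-\lceil e/\alpha_{\mathcal P}^2\rceil}$ to get
\[
    dist_{\mathcal P^-}^2(0,\mathcal F(z_k)\cap\mathrm{range}(\mathcal P)) \leq \frac{1}{\lceil e/\alpha_{\mathcal P}^2\rceil}\, dist_{\mathcal P}^2\!\left(z_{k-\lceil e/\alpha_{\mathcal P}^2\rceil},\mathcal Z^*\right)\ ,
\]
after taking the minimum over $z_*\in\mathcal Z^*$ on the right-hand side. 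Third, invoke metric sub-regularity \eqref{eq:eq-linear-condition} for $\mathcal P$, which converts $dist_{\mathcal P}(z,\mathcal Z^*)$ into $\tfrac1{\alpha_{\mathcal P}}dist_{\mathcal P^-}(0,\mathcal F(z))$, yielding
\[
    dist_{\mathcal P^-}^2(0,\mathcal F(z_k)\cap\mathrm{range}(\mathcal P)) \leq \frac{1}{\alpha_{\mathcal P}^2\lceil e/\alpha_{\mathcal P}^2\rceil}\, dist_{\mathcal P^-}^2(0,\mathcal F(z_{k-\lceil e/\alpha_{\mathcal P}^2\rceil})\cap\mathrm{range}(\mathcal P)) \leq \frac1e\, dist_{\mathcal P^-}^2(0,\mathcal F(z_{k-\lceil e/\alpha_{\mathcal P}^2\rceil})\cap\mathrm{range}(\mathcal P))\ .
\]
Fourth, iterate this contraction $c$ times, then bound $dist_{\mathcal P^-}^2(0,\mathcal F(z_{k-c\lceil e/\alpha_{\mathcal P}^2\rceil})\cap\mathrm{range}(\mathcal P))$ by $dist_{\mathcal P^-}^2(0,\mathcal F(z_0)\cap\mathrm{range}(\mathcal P))$ using the monotonic-decay proposition, and finally use $c > k/\lceil e/\alpha_{\mathcal P}^2\rceil - 1$ to replace $e^{-c}$ by $\exp(1 - k/\lceil e/\alpha_{\mathcal P}^2\rceil)$. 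This is the same four-line recursion as in the proof of Theorem~\ref{thm:thm-linear}.

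The only genuinely new ingredients — and hence the only places where care is needed — are the ones already absorbed into the two prior propositions in this appendix: that the generalized update $\mathcal P(z_k-z_{k+1})$ lies in $\mathcal F(z_{k+1})\cap\mathrm{range}(\mathcal P)$ (so the metric is well posed), and that the identities $\Vert w\Vert_{\mathcal P^-}^2 = \langle w,\mathcal P^- w\rangle$, $w = \mathcal P^-\mathcal P w$ for $w\in\mathrm{range}(\mathcal P)$, and $\mathcal P^-\mathcal P(z_k-z_{k+1}) = $ the component of $z_k-z_{k+1}$ in $\mathrm{range}(\mathcal P)$ make the telescoping and the completion-of-squares steps go through verbatim. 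Since those have been established, there is no real obstacle here: the proof is literally ``repeat the argument of Theorem~\ref{thm:thm-linear} with $\mathcal P^-$ in place of $P_s^{-1}$, using the two propositions above in place of Theorem~\ref{thm:thm-sublinear} and Proposition~\ref{thm:thm-decay}.'' If I had to name a subtle point to double-check, it is that the sublinear proposition's right-hand side involves $\Vert z_0 - z_*\Vert_{\mathcal P}^2$ with the \emph{seminorm} $\Vert\cdot\Vert_{\mathcal P}$, and one must confirm $dist_{\mathcal P}(z,\mathcal Z^*)$ is still the quantity controlled by metric sub-regularity — which it is, by the hypothesis $\alpha_{\mathcal P}\,dist_{\mathcal P}(z,\mathcal Z^*)\leq dist_{\mathcal P^-}(0,\mathcal F(z))$ assumed in part (iii).
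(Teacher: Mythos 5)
Your proposal is correct and matches the paper exactly: the paper itself proves this proposition by declaring it "identical to Theorem \ref{thm:thm-sublinear} and Theorem \ref{thm:thm-linear}," and your four-step recursion (split $k$ by multiples of $\lceil e/\alpha_{\mathcal P}^2\rceil$, apply the sublinear bound, convert via metric sub-regularity, contract by $1/e$ and finish with monotonicity) is precisely the argument of Theorem \ref{thm:thm-linear} transcribed to the $\mathcal P^-$ setting. Your added care about the seminorm $\Vert\cdot\Vert_{\mathcal P}$ and the fact that $dist_{\mathcal P^-}(0,\mathcal F(z))\le dist_{\mathcal P^-}(0,\mathcal F(z)\cap\mathrm{range}(\mathcal P))$ keeps the chain of inequalities pointing the right way is a valid (and slightly more explicit) treatment of a point the paper glosses over.
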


}

\end{document}